\begin{document}

\title{CONTINUOUS COMODULES
}


\author{Nikken Prima Puspita         \and
        Indah Emilia Wijayanti  \and
        Budi Surodjo
}


\institute{Nikken Prima Puspita \at
              Department of Mathematics, Universitas Diponegoro Semarang-Indonesia.  \\
              \email{nikkenprima@lecturer.undip.ac.id}           
           \and
           Indah Emilia Wijayanti \at
              Department of Mathematics, Universitas Gadjah Mada, Yogyakarta-Indonesia.
           \and
           Budi Surodjo \at
              Department of Mathematics, Universitas Gadjah Mada, Yogyakarta-Indonesia.
}

\date{Received: date / Accepted: date}

\maketitle

\begin{abstract}
Let $R$ be a commutative ring with unity and $C$ be an $R$-coalgebra. The ring $R$ is clean if every $ r\in R $ is the sum of a unit and an idempotent element of $R$. An $R$-module $M$ is clean if the endomorphism ring of $M$ over $R$ is clean. Moreover, every continuous module is clean. We modify this idea to the comodule and coalgebra cases. A $C$-comodule $M$ is called a clean comodule if the $C$-comodule endomorphisms of $M$ are clean. We introduced continuous comodules and proved that every continuous comodules is a clean comodule.
\keywords{clean modules \and clean comodules \and continuous modules \and continuous comodules}
 \subclass{16T15 \and 16D10}
\end{abstract}

\section{Introduction}
\label{intro}
Let $R$ be a commutative ring with unity. The notions of clean ring and clean module motivate us to bring this concept to the coalgebra and comodule case. In 1977, Nicholson, W. K. defined clean rings \cite{nic77}. The ring $ R $ is said to be clean if every element of $ R $ can be expressed as the sum of a unit and an idempotent element. Furthermore, clean rings are a subclass of exchange rings \cite{warfield,craw}.

We denote the endomorphisms of $R$-module $R$ by $End_{R}(R)$, which is isomorphic to the ring $R$. Consequently, $R$ is clean if and only if the ring $End_{R}(R)$ is also clean. In general, for any $R$-module $M$, several authors have studied the cleanness of the endomorphism of $R$-modules $M$ (denoted by $End_{R}(M))$. In 1998, Nicholson, W.K. and Varadarajan, K. \cite{nic98} showed that the ring of a linear transformation of a countable linear vector space is clean. The same result is valid for arbitrary vector spaces over a field and any vector space over a division ring (\cite{sear} and \cite{nic04}).

In 2006, Camillo et al. \cite{cam06} introduced the clean module. A clean module is an $R$-module $M$, in which the ring of $End_{R}(M)$ is a clean ring. We recall results in \cite{cam06} and \cite{cam12} regarding the necessary and sufficient conditions of clean modules (see Proposition 2.2 and 2.3 in \cite{cam06}). In \cite{cam06}, the authors also give some examples of a module with a clean property and prove that the endomorphism ring of a continuous module is clean.

In 1969, Sweedler proposed a coalgebra over a field. Later, this ground field was generalized to any ring \cite{wisbauer}. On the other hand, a comodule over a coalgebra is well-known to be a dualization module over a ring. Throughout, $(C,\Delta,\varepsilon)$ is a coassociative and counital $R$-coalgebra and $(M,\varrho^{M})$ is a right $C$-comodule. Moreover, we consider $C^{\ast}=Hom_{R}(C,R)$. For arbitrary $f,g\in C^{\ast}$, the convolution product in $C^{\ast}$ (denoted by $"\ast"$) defined as below :
\begin{equation}\label{convo}
f\ast g =\mu\circ(f\otimes g)\circ \Delta
\end{equation}
The set $C^{\ast}$ is an $R$-algebra (ring) over addition and convolution products as in Equation (\ref{convo}) (see \cite{wisbauer}). We refer to $(C^{\ast},+,\ast)$ as a dual $R$-algebra of $C$. The structure of $C^{\ast}$ implies some relationship between the category of $C$-comodules and $C^{\ast}$-modules. Any right (left) $C$-comodule $M$ is a left (right) module over the dual algebra $C^{\ast}$ by a scalar multiplication as in the following equation:
\begin{equation}\label{scalarmul}
  \rightharpoonup : C^{\ast}\otimes_{R} M\rightarrow M, g\otimes m \mapsto (I_{M}\otimes_{R}g)\circ \varrho^{M}(m),
\end{equation}
which is $g\otimes m \mapsto (I_{M}\otimes_{R}g)\circ \varrho^{M}(m)=\sum m_{\underline{0}}g(m_{\underline{1}})\in M$ \cite{wisbauer}. Furthermore, the category of right $C$-comodule ($\mathbf{M}^{C}$) is a subcategory of the category of left $C^{\ast}$-module ($_{C^{\ast}}\mathbf{M}$).

In this paper, we assumed that $C$ does not always satisfy the $\alpha$-condition. However, $End^{C}(M)$ is a subring of $_{C^{\ast}}End(M)$. Although $M$ is a clean $C^{\ast}$-module, it does not imply that the ring of $End^{C}(M)$ is clean. For example, $\mathbb{Z}$ is a subring of $\mathbb{R}$. Hence, $\mathbb{R}$ is a clean ring, but $\mathbb{Z}$ is not clean.  Based on this fact, as a dualization concept of clean modules, clean comodules are defined in the following way.
\begin{definition}\label{cleancomodule}
Let $(C,\Delta,\varepsilon)$ be an $R$-coalgebra. A right (left) $C$-comodule $M$ is said to be a clean comodule if the ring $End^{C}(M)$ is clean.
\end{definition}
Definition \ref{cleancomodule} means that $M\in \mathbf{M^{C}}$ is clean if for any $f\in End^{C}(M), f=u+e$ where $u$ is a unit and $e$ is an idempotent in $End^{C}(M)$. Moreover, since any $R$-coalgebra $C$ is a comodule over itself, $C$ is clean coalgebra over $R$ if the ring of $End^{C}(C)$ is clean. In section 2, we start our investigation by giving propositions to explain the necessary and sufficient conditions of clean comodules.

We study continuous and quasi-continuous modules in \cite{muler}. In 2006, Camillo et al. \cite{cam06} proved that every quasi-continuous module is clean. By using this fact, we bring the concept of continuous modules in \cite{cam06} to define continuous and quasi-continuous in comodule structures. Moreover, in Section 3, as our main result, we prove that a continuous (quasi-continuous) comodule is clean.

\section{The Necessary and Sufficient Condition of Clean Comodules}
\label{sec:1}
We recall Lemma 2.1 and Proposition 2.2 in \cite{cam06} as a necessary and sufficient condition of clean modules. We will make some modifications for clean comodules by changing the endomorphisms of $R$-modules to $C$-comodules. Moreover, we need to make sure that the set of kernel is also a subcomodule. Our goal is to give some necessary and sufficient conditions for clean comodules.
\begin{lemma}\label{NSCCleanCom1}
Let $C$ be a flat $R$-module and be an $R$-coalgebra and $M$ be a right $C$-comodule. Consider $S=End^{C}(M)$ and $f,e\in End^{C}(M)$, where $e$ is an idempotent, $A=Ker(e)$ and $B=Im(e)$. Then $f-e$ is a unit in $End^{C}(M)$ if and only if there exists a $C$-comodule decomposition $M=X\oplus Y$ such that $f(A)\subseteq X, 1-f(B)\subseteq Y$, and both $f:A\rightarrow X$ and $1-f:B\rightarrow Y$ are isomorphisms.
\end{lemma}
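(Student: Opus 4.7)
The plan is to mimic the module-case argument of Lemma 2.1 in \cite{cam06}, being careful at every step that submodules, direct sum decompositions, and inverse maps remain in the category $\mathbf{M}^{C}$. Put $u = f - e$. The central observation is that, since $e^{2}=e$, we obtain a $C$-comodule decomposition $M = A \oplus B$ on which $u$ restricts transparently: because $e$ vanishes on $A$ we have $u|_{A} = f|_{A}$, and because $e$ acts as the identity on $B$ we have $u|_{B} = -(1-f)|_{B}$. Thus understanding $u$ reduces to understanding $f|_{A}$ and $(1-f)|_{B}$, which is exactly what the statement isolates.

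For the forward direction I would assume $u$ is a unit in $S$, so $u\colon M \to M$ is a $C$-comodule isomorphism. Setting $X = u(A) = f(A)$ and $Y = u(B) = (1-f)(B)$, images of comodule maps are subcomodules, and $u$ carries the internal direct sum $A \oplus B$ to an internal direct sum of its images, giving $M = X \oplus Y$ in $\mathbf{M}^{C}$. The restrictions $f|_{A}\colon A \to X$ and $(1-f)|_{B}\colon B \to Y$ are then $C$-comodule isomorphisms since they agree with $u|_{A}$ and $-u|_{B}$ respectively.

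For the reverse direction I would start from a decomposition $M = X \oplus Y$ for which $f|_{A}\colon A \to X$ and $(1-f)|_{B}\colon B \to Y$ are $C$-comodule isomorphisms, and verify that $u = f - e$ is a unit. By the restriction identities above, $u$ coincides with the direct sum of $f|_{A}$ and $-(1-f)|_{B}$ as a map $A \oplus B \to X \oplus Y$. Since both summand maps are bijective $C$-comodule maps and both source and target equal $M$, the map $u$ is a bijective $C$-comodule homomorphism; its set-theoretic inverse is automatically colinear, so $u \in S$ is a unit.

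The main obstacle, and the reason the flatness hypothesis on $C$ is imposed, is keeping every construction inside $\mathbf{M}^{C}$. Flatness of $C$ over $R$ ensures that kernels of $C$-comodule maps are again subcomodules, so $A = Ker(e)$ lies in $\mathbf{M}^{C}$; combined with the fact that $B = Im(e)$ is a subcomodule, this yields the crucial decomposition $M = A \oplus B$ in $\mathbf{M}^{C}$ on which the entire argument rests. Without flatness, $A$ might only be an $R$-submodule and the argument would collapse. The remaining technicalities, such as verifying that $X \oplus Y$ is an internal comodule direct sum and that the inverse of a bijective colinear map is colinear, are routine once this initial decomposition is secured.
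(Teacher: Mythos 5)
Your proof is correct and follows essentially the same route as the paper's: use flatness of $C$ to make $M=Ker(e)\oplus Im(e)$ a $C$-comodule decomposition, observe that $u|_{A}=f|_{A}$ and $u|_{B}=-(1-f)|_{B}$, and set $X=u(A)$, $Y=u(B)$. If anything, your converse direction (writing $u$ as the direct sum of the two given isomorphisms $A\rightarrow X$ and $B\rightarrow Y$, hence bijective with automatically colinear inverse) is more careful than the paper's, which only computes $u(M)=X+Y=M$, i.e.\ surjectivity, before asserting that $u$ is a unit.
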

\begin{proof}
Let $S=End^{C}(M)\subseteq End_{R}(M)$ and $f,e\in S$, where $e$ is an idempotent and $A=Ker(e)$ and $B=Im(e)$.

$\Rightarrow$
Suppose that $f-e=u$ or $f=u+e$ for some unit $u\in S$ (this means $f$ is clean in $S$). We prove that there exists a $C$-comodule decomposition $M=X\oplus Y$ such that $f(A)\subseteq X,(1-f)(B)\subseteq Y$, and both $f:A\rightarrow X$ and $1-f:B\rightarrow Y$ are $C$-comodule isomorphisms .

Consider $e\in End^{C}(M)$ is an idempotent and $1=I_{M}$, where $A=Ker(e)$ and $B=Im(e)$. Since $End^{C}(M)$ is a subring of $End_{R}(M)$, $e\in End_{R}(M)$. As an idempotent $R$-module endomorphism and by using \cite{wis91} (see page 58)\\
\begin{equation}
Ker(e)=Im(1-e)
\end{equation}
\begin{equation}
Im(e)=Ker(1-e).
\end{equation}
and,
\begin{equation}\label{moduledecomp}
M = Im(1-e)\oplus Im(e)=Ker(e)\oplus Im(e)=A\oplus B.
\end{equation}
The decomposition in Equation \ref{moduledecomp} is an $R$-module decomposition. Since $C$ is flat, $e$ is a $C$-pure morphism; moreover, $A=Ker(e)$ is a $C$-subcomodule of $M$.
We now need to prove that $1-e$ is a $C$-comodule morphism. The commutativity of $C$-comodule morphism of $e$ means $\varrho^{M}\circ e= (e\otimes I_{C}) \circ \varrho^{M}$. Moreover, for every $m\in M$, we have:
\begin{align*}
\varrho^{M}\circ (1-e)(m)&=\varrho^{M}\circ (1(m)-e(m))\\
    &=(\varrho^{M}\circ 1)(m)-(\varrho^{M}\circ e)(m)\\
    &=((1\otimes I_{C}) \circ \varrho^{M})(m)-((e\otimes I_{C}) \circ \varrho^{M})(m)\\
    & \text{( since $1$ and $e$ are $C$-comodule morphism)}\\
    &=(((1\otimes I_{C})-(e\otimes I_{C})) \circ \varrho^{M})(m)\\
    &=(((1-e)\otimes I_{C})\circ \varrho^{M})(m).
    \end{align*}
Hence, $\varrho^{M}\circ (1-e)= ((1-e)\otimes I_{C})\circ \varrho^{M}$ or $(1-e)$ is a $C$-comodule morphism. Consequently, $Ker(1-e)$ is a $C$-subcomodule, which implies that
\begin{center}
$M=Ker(e)\oplus Ker(1-e)=Ker(e)\oplus Im(e)=A\oplus B$
\end{center}
is a $C$-comodule decomposition.
Let $f:M\rightarrow M$ be a $C$-comodule morphism and $A=Ker(e)$ and $B=Im(e)$. Suppose that $f=u+e\in S$ and put $X=u(A)$ and $Y=u(B)$.
\begin{enumerate}
\item For $f|_{A}$, we have
\begin{align*}
f(A)&=(e+u)(A)\\
&=e(A)+u(A)\\
&=0+u(A), \text{ (since $A=Ker(e)$)}\\
&=X.
\end{align*}
Hence, $f(A)\subseteq X$.
\item Furthermore, for $1-f|_{B}$,
\begin{align*}
1-f(B)&=B-f(B)\\
&=B-(u+e)(B)\\
&=B-u(B)-e(B)\\
&=(1-e)(B)-u(B), \text{ (since $B=Im(e)=Ker(1-e))$}\\
&=-u(B)\\
&=-Y
\end{align*}
This means $1-f(B)\subseteq -Y\subseteq Y$.
\item We are going to prove that $f:A\rightarrow X$ is a $C$-comodule isomorphism. Since $f=u+e$,
\begin{align*}
f(1-e)&=(e+u)(1-e)\\
&=(e-e+u-ue)\\
&=u(1-e).\\
\end{align*}
By $(1-e)(M)=Im(1-e)=Ker(e)$, then
\begin{align*}
f((1-e)(M))&=u((1-e)(M))\\
\Leftrightarrow f(Ker(e))&= u(Ker(e))\\
\Leftrightarrow f(A)&=u(A)\\
\Leftrightarrow&=X.
\end{align*}
This means $f_{A}\simeq X$ or $f:A\rightarrow X$ is an isomorphism. In particular, $f:A\rightarrow X$ is a $C$-comodule isomorphism.
\item We will now prove that $1-f$ is a $C$-comodule isomorphism. We have already proven that if $f$ is a $C$-comodule morphism, then so is $1-f$. Moreover, we have:
\begin{align*}
(1-f)e&=e-fe\\
&=ee-fe\\
&=(e-f)e\\
&=(e-(e+u))e\\
&=-ue.
\end{align*}
Since $e(M)=Im(e)=B$, then
\begin{align*}
&((1-f)e)(M)=-ue(M)\\
&(1-f)(e(M))=-u(e(M))\\
&\Leftrightarrow (1-f)(B)=-u(B)=-Y\subseteq Y
\end{align*}
Since $u$ is a unit, $(1-f)|_{B}$ is an isomorphism. In particular, $1-f:B\rightarrow Y$ is an isomorphism.
\item Consequently, since $f:A\rightarrow X$ and $1-f:B\rightarrow Y$ are $C$-comodule isomorphisms we have a decomposition.
\begin{center}
$M=A\oplus B\simeq X\oplus Y$.
\end{center}
\end{enumerate}

$(\Leftarrow)$
Conversely, we prove that $u:=f-e$ is a unit in $S$, as follows:
\begin{align*}
u(M)&=u(A+B)\\
&=(f-e)(A+B)\\
&=f(A)-e(A)-f(B)+e(B)\\
&=f(A)-0-f(B)+((f-u)(B)),\text{ (since $A=Ker(e)$ and $f=e+u$)}\\
&=f(A)-u(B)\\
&=f(A)+(1-f)(B), \text{ (since $(-u)(B)=(1-f)(B)$)}\\
\end{align*}
Hence, $f:A\rightarrow X$ and $1-f:B\rightarrow Y$ are isomorphisms. Therefore,
\begin{align*}
u(M)&=f(A)+(1-f)(B)\\
&=X+Y\\
&=I_{M}(M).
\end{align*}
Clearly, $u=I_{M}$ is a unit in $S=End^{C}(M)$. In particular, $f-e=u$ is a unit in $S$.
\end{proof}

From Lemma \ref{NSCCleanCom1}, we get the following proposition, which is useful to prove our Main Theorem.
\begin{proposition}\label{NSCCleanCom2}
Let $C$ be a flat $R$-module and an $R$-coalgebra, and let  $M$ be a right $C$-comodule. An element $f\in End^{C}(M)$ is clean if and only if there exist $C$-comodule decompositions $M=A\oplus B=X\oplus Y$ such that $f(A)\subseteq C, (1-f)(B)\subseteq Y$, and both $f:A\rightarrow X$ and $1-f:B\rightarrow Y$ are isomorphisms.
\end{proposition}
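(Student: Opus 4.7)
The plan is to reduce this proposition directly to Lemma \ref{NSCCleanCom1}, since the only new ingredient is the passage between ``$f$ is a clean element of $End^{C}(M)$'' and ``the existence of a $C$-comodule decomposition $M = A\oplus B$''. In Lemma \ref{NSCCleanCom1} the subcomodules $A$ and $B$ are produced from a fixed idempotent $e$; here the reverse direction must be handled, producing an idempotent from a given decomposition.

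For the forward implication, I would assume $f = u + e$ with $u$ a unit and $e$ an idempotent in $End^{C}(M)$, set $A = Ker(e)$ and $B = Im(e)$, and invoke the argument already established inside Lemma \ref{NSCCleanCom1}, which uses flatness of $C$ together with the fact that $1-e$ is a $C$-comodule morphism, to conclude that $M = A \oplus B$ is a $C$-comodule decomposition. Lemma \ref{NSCCleanCom1} then immediately supplies the second decomposition $M = X \oplus Y$ with $f(A) \subseteq X$, $(1-f)(B) \subseteq Y$, and both restrictions $f\colon A \to X$ and $1-f\colon B \to Y$ isomorphisms, which is exactly what is required.

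For the converse, given $C$-comodule decompositions $M = A \oplus B = X \oplus Y$ satisfying the stated conditions, I would define $e \in End_{R}(M)$ to be the projection onto $B$ along $A$, so that $e^{2} = e$, $Ker(e) = A$, and $Im(e) = B$. The next step is to verify that $e$ is a $C$-comodule morphism: since $A$ and $B$ are subcomodules and $C$ is flat, the functor $-\otimes_{R} C$ preserves the direct sum, so that $M\otimes_{R} C \cong (A\otimes_{R} C)\oplus (B\otimes_{R} C)$, and the coaction $\varrho^{M}$ decomposes as $\varrho^{A}\oplus \varrho^{B}$ on the two summands. From this the identity $\varrho^{M}\circ e = (e\otimes I_{C})\circ \varrho^{M}$ is immediate, so $e\in End^{C}(M)$. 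Lemma \ref{NSCCleanCom1} applied to this specific idempotent then shows that $u := f - e$ is a unit in $End^{C}(M)$, and hence $f = u + e$ is a clean expression for $f$.

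The main obstacle I expect is precisely the verification that the projection associated with a direct sum decomposition of $C$-comodules lives in $End^{C}(M)$ rather than merely in $End_{R}(M)$. This is the comodule analogue of an elementary fact for modules, and flatness of $C$ plays an essential role in ensuring that $M \otimes_{R} C$ decomposes compatibly with $M = A\oplus B$; once this compatibility is in place, the argument parallels the one used in Lemma \ref{NSCCleanCom1} to show that $1-e$ is a $C$-comodule morphism whenever $e$ is.
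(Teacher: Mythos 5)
Your proposal is correct and follows essentially the same route as the paper: the forward direction is delegated entirely to Lemma~\ref{NSCCleanCom1}, and the converse is handled by showing that the projection onto $B$ along $A$ is an idempotent in $End^{C}(M)$ and then invoking the lemma again. The only cosmetic difference is that you justify $\varrho^{M}\circ e=(e\otimes I_{C})\circ\varrho^{M}$ abstractly, via flatness and the decomposition $M\otimes_{R}C\cong(A\otimes_{R}C)\oplus(B\otimes_{R}C)$, whereas the paper verifies the same identity element-wise in Sweedler notation using that $A$ and $B$ are subcomodules; the underlying idea is identical.
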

\begin{proof}
($\Leftarrow$ ) Based on Lemma \ref{NSCCleanCom1} we have $M=A\oplus B$ as a $C$-comodule decomposition. Take $X=u(A)$ and $Y=u(B)$, then $f|_{A}:A\rightarrow X$ and $(1-f)|_{B}:B\rightarrow Y$ are isomorphisms. In particular, there exists the $C$-comodule decomposition $M=A\oplus B\simeq X\oplus Y$.

($\Rightarrow$) Conversely, consider the decomposition
\begin{center}
$M=A\oplus B$.
\end{center}
There exist the projection map
\begin{equation}\label{proj}
\pi_{B}:M\rightarrow B, a+b\mapsto b
\end{equation}
and the inclusion map
\begin{equation}\label{inclus}
\iota_{B}:B\rightarrow M,
\end{equation}
such that $f_{B}=\iota_{B}\circ \pi_{B}$ is an idempotent in $End_{R}(M)$. Put
\begin{center}
$Y=Im(f_{B})$ and $X=Ker(f_{B})$
\end{center}
and $X=A$ and $Y=B$.
Now we need to check that $\iota_{B}\circ \pi_{B}:M\rightarrow M$ in $End^{C}(M)$. It is clear that the inclusion map $\iota_{B}$ is a $C$-comodule morphism. For every $m=a+b\in M$, we have:
\begin{align*}
(\pi_{B}\otimes I_{C})\circ \varrho^{M}(m)&=(\pi_{B}\otimes I_{C})\circ \varrho^{M}(a+b)\\
&=(\pi_{B}\otimes I_{C})(\varrho^{M}(a))+(\pi_{B}\otimes I_{C})(\varrho^{M}(b))\\
&=(\pi_{B}\otimes I_{C})(\sum a_{0}\otimes a_{1})+(\pi_{B}\otimes I_{C})(\sum b_{0}\otimes b_{1})\\
&=(\sum \pi_{B}(a_{0})\otimes I_{C}(a_{1}))+(\sum \pi_{B}(b_{0})\otimes I_{C}(b_{1}))\\
&=0+(\sum \pi_{B}(b_{0})\otimes I_{C}(b_{1}))\text{ (since $a_{0}\in A$)}\\
&=\sum b_{0}\otimes b_{1}\\
&=\varrho^{M}|_{B}(b)\text{ (since $B$ is a $C$-subcomodule of $M$)}\\
&=\pi_{B}\circ \varrho^{M}|_{B}(m).
\end{align*}
Thus, $\pi_{B}$ is a $C$-comodule morphism; moreover, $e=\iota_{B}\circ \pi_{B}$ is an idempotent in $End^{C}(M)$.\\
Now, we have that there exist $C$-comodule decomposition $M=X\oplus Y$ such that $f(A)\subseteq X$ and $(1-f)(B)\subseteq Y$, and $f:A\rightarrow X$ and $1-f:B\rightarrow Y$ are isomorphisms. As the implication of Lemma \ref{NSCCleanCom1}, $f$ is a clean element in $End^{C}(M)$ or $M$ is a clean $C$-comodule.
\end{proof}
Based on Lemma \ref{NSCCleanCom1} and Proposition \ref{NSCCleanCom2}, we have already obtained the necessary and sufficient condition of clean $C$-comodules.

\section{Essential Subcomodules and Continuous Comodules}
Before studying the continuous comodule, we need to understand some preliminary structures, which are motivated by a similar situation in module theory. We define an essential subcomodule below:
\begin{definition}\label{essubcom}
Let $M$ be a comodule over $R$-coalgebra $C$. A $C$-subcomodule of $N$ is called an essential subcomodule of $M$ (or $M$ is essential extension of $N$) if for any nonzero $C$-subcomodule $L\subseteq M$, we have $L\cap N\neq \{0\}$. A $C$-subcomodule $N$ is called \textbf{closed} if it does not have any proper essential extension in $M$. If $N'$ is a closed subcomodule of $M$ and $N$ is an essential subcomodule of $N'$, then we call $N'${\em a closure of $N$ in $M$}.
\end{definition}
Definition \ref{essubcom} means that for every nonzero $C$-subcomodule $L\subseteq M$ with $L\cap N=\{0\}$, $L=\{0\}$. Throughout, we denote an essential $C$-subcomodule $N$ of $M$ by $N\subseteq ^{e}M$.

In module theory we have certain properties related to the essential $R$-submodule. Let $K,L$ be submodules of $M$ with $K\subseteq L\subseteq M$. Therefore, $K\subseteq_{e}M$ if and only if $K\subseteq_{e} L$ and $L\subseteq_{e} M$.
Furthermore, we apply the property of essential submodules to comodule structure.
\begin{lemma}\label{lemmaessen}
Let $M$ be a comodule over an $R$-coalgebra $R$ and $K,N$ be $C$-subcomodules of $M$ where $K\subseteq N\subseteq M$. Then  $K\subseteq^{e}M$ if and only if $K\subseteq^{e} N$ and $N\subseteq^{e} M$
\end{lemma}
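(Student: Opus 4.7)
The plan is to mimic the classical module-theory argument, chasing intersections, while being careful that every object appearing in an intersection is itself a $C$-subcomodule. Since the paper has been working under the assumption that $C$ is flat as an $R$-module, intersections of $C$-subcomodules are again $C$-subcomodules (kernels of comodule morphisms are $C$-pure, so the intersection $L\cap N$ inherits a comodule structure), and this fact will be used silently throughout.

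For the forward direction, assume $K\subseteq^{e}M$. To show $K\subseteq^{e}N$, I would take a nonzero $C$-subcomodule $L\subseteq N$; since $N\subseteq M$, $L$ is a nonzero $C$-subcomodule of $M$, and essentiality of $K$ in $M$ forces $L\cap K\neq\{0\}$. To show $N\subseteq^{e}M$, pick any nonzero $C$-subcomodule $L\subseteq M$; essentiality of $K$ in $M$ gives $L\cap K\neq\{0\}$, and because $K\subseteq N$ we have $L\cap K\subseteq L\cap N$, so $L\cap N\neq\{0\}$.

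For the reverse direction, assume $K\subseteq^{e}N$ and $N\subseteq^{e}M$. Let $L$ be any nonzero $C$-subcomodule of $M$. Applying $N\subseteq^{e}M$, the intersection $L\cap N$ is nonzero; crucially, $L\cap N$ is itself a $C$-subcomodule of $N$ (this is the step that uses flatness of $C$). Now applying $K\subseteq^{e}N$ to the nonzero subcomodule $L\cap N$ of $N$, we get $(L\cap N)\cap K\neq\{0\}$. Since $K\subseteq N$, this intersection collapses to $L\cap K$, and therefore $L\cap K\neq\{0\}$, proving $K\subseteq^{e}M$.

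I expect the main obstacle to be purely expository rather than mathematical: one must justify that $L\cap N$ is a $C$-subcomodule and not merely an $R$-submodule, which is exactly the point at which the standing flatness hypothesis on $C$ enters. Once this is stated, the rest is a diagram-free chain of elementary containments, and the two directions are symmetric enough that the whole argument fits in a few lines.
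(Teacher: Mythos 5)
Your proof is correct and follows essentially the same intersection-chasing argument as the paper (the paper phrases the converse contrapositively, starting from $K\cap L=\{0\}$, but the content is identical). Your explicit remark that $L\cap N$ must be verified to be a $C$-subcomodule --- the point where flatness of $C$ enters --- is a detail the paper's own proof uses silently, so flagging it is a small improvement rather than a divergence.
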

\begin{proof}
($\Rightarrow$) Let $K\subseteq^{e}M$. This means that for any nonzero $C$-subcomodule $H\subseteq M$ we have $K\cap H\neq \{0\}$. Suppose that $H'$ is a non-zero subcomodule of $N$. Since $H'$ is also a subcomodule of $M$ and $K\subseteq^{e}M$,  $K\cap H'\neq \{0\}$. Thus, $K\subseteq^{e} N$. Moreover, for any nonzero $C$-subcomodule $H\subseteq M$, since $K\subseteq^{e}M$, we have $H\cap K\neq \{0\}$. Hence, $\{0\}\neq H\cap K\subseteq K\subseteq N$. Thus, $H\cap N\neq \{0\}$ or $N\subseteq^{e}M$.

($\Leftarrow$) Now suppose that $K\subseteq^{e} N$ and $N\subseteq^{e} M$. We prove that $K\subseteq^{e}M$. Given an arbitrary $C$-subcomodule $L\subseteq M$ with $K\cap L=\{0\}$. Since $K\subseteq^{e} N$ and $K\cap L=\{0\}$, $L\cap N=\{0\}$. On the other hand, $N\subseteq^{e} M$ and $L$ is a $C$-subcomodule of $M$, which implies $L=0$. In particular, $K\subseteq^{e}M$.
\end{proof}

We have some modifications of Remark 19.4 in \cite{tugan} as follows:
\begin{lemma} \label{clossubcom} Let $M$ be a $C$-comodule. The following assertions hold:
\begin{description}
  \item[(1)] Every $C$-subcomodule of $M$ has at least one closure in $M$.
  \item[(2)] If $G$ and $N$ are two $C$-subcomodules of $M$ where $G\cap N=0$, then $G$ has at least one closed complement $H$ in $M$ that contains $N$.
  \item[(3)] If $N_{1}$ and $N_{2}$ are two subcomodules of $M$ such that $M_{1}\subseteq N_{1}$, $M_{2}\subseteq N_{2}$, $M_{1}\cap M_{2}=0$, then $M$ is an essential extension of $M_{1}\oplus M_{2}$, and $M_{1}\cap K\neq 0$ for every subcomodule $K$ of $M$ that properly contains $M_{2}$.
\end{description}
\end{lemma}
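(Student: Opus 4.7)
The overall approach is to prove each of the three assertions by a Zorn's lemma argument on an appropriate poset of subcomodules of $M$, mimicking the standard module-theoretic proofs. The essential preliminary I need is that the category $\mathbf{M}^{C}$ is closed under directed unions of subcomodules and under finite intersections; the flatness hypothesis on $C$ (standing since Lemma \ref{NSCCleanCom1}) is what guarantees these stability properties, since it makes inclusions of subcomodules into $M$ pure, so that $-\otimes_{R} C$ preserves the relevant limits and colimits needed to realize $\varrho^{M}$ on the unions and intersections.

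For part (1), I would fix a subcomodule $N \subseteq M$ and consider the family $\mathcal{F} = \{P \subseteq M : N \subseteq P \text{ and } N \subseteq^{e} P\}$ ordered by inclusion. This family is non-empty since it contains $N$. Given a chain in $\mathcal{F}$, the directed union $U$ of its members is a $C$-subcomodule, and any nonzero subcomodule $L \subseteq U$ contains a nonzero element lying in some $P_{\alpha}$, whose cyclic subcomodule meets $N$ nontrivially by essentiality of $N$ in $P_{\alpha}$; hence $L \cap N \neq 0$ and $U \in \mathcal{F}$. Zorn produces a maximal element $N'$, and I would invoke Lemma \ref{lemmaessen} to argue $N'$ is closed in $M$: if $N' \subsetneq N''$ were a proper essential extension, then $N \subseteq^{e} N' \subseteq^{e} N''$ would force $N \subseteq^{e} N''$, contradicting the maximality of $N'$.

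For part (2), with $G \cap N = 0$, I would run Zorn on $\mathcal{G} = \{H \subseteq M : N \subseteq H,\ G \cap H = 0\}$, which contains $N$ and is closed under directed unions (since $G \cap \bigcup H_{\alpha} = \bigcup (G \cap H_{\alpha}) = 0$). A maximal element $H$ is forced to be closed: if $H \subsetneq H'$ with $H \subseteq^{e} H'$, maximality would give $G \cap H' \neq 0$, and then essentiality would yield $0 \neq (G \cap H') \cap H \subseteq G \cap H$, a contradiction. For part (3), I would apply parts (1) and (2) in succession, enlarging each $M_{i}$ to a suitable maximal subcomodule inside $N_{i}$ so that the direct sum $M_{1} \oplus M_{2}$ remains internal; the claimed essential extension property then follows from Lemma \ref{lemmaessen}, and the final clause that $M_{1} \cap K \neq 0$ for every subcomodule $K$ properly containing $M_{2}$ is exactly the maximality statement produced by Zorn in part (2).

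The main obstacle I anticipate is not the Zorn's-lemma bookkeeping, which is routine once the template is chosen, but the verification that every construction stays inside $\mathbf{M}^{C}$ rather than merely inside $_{C^{\ast}}\mathbf{M}$. In particular, one has to check that the directed union of $C$-subcomodules of $M$ carries a coaction compatible with the inclusions, and that intersections of $C$-subcomodules remain $C$-subcomodules; both reductions rely on the flatness of $C$ used earlier in the paper, and without this hypothesis the Zorn argument would only supply a $C^{\ast}$-submodule rather than the required $C$-subcomodule.
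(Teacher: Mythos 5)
Your proposal follows the paper's proof essentially verbatim: the same Zorn's-lemma posets for (1) and (2), the same maximality-plus-essentiality contradiction showing the maximal complement $H$ is closed, and the same assembly of (3) from (1), (2) and Lemma~\ref{lemmaessen}; you are in fact more careful than the paper about why chain unions remain $C$-subcomodules and remain essential extensions. The only step you leave implicit that the paper writes out is the two-case verification that the maximal $H$ with $G\cap H=0$ satisfies $G\oplus H\subseteq^{e}M$, which is the input Lemma~\ref{lemmaessen} needs in part (3).
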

\begin{proof}
\begin{description}
  \item[(1)] Let $N$ be a $C$-subcomodule of $M$ and
   \begin{center}
   $\xi=\{H_{i}| H_{i}$ be a $C$-subcomodule of $M$, $N\subseteq^{e}H_{i}\}$.
    \end{center}
 The set $\xi$ is not empty and contains the union of any ascending chain of its elements. By Zorn's Lemma, there exists a maximal element of $\xi$, i.e., $H=\cup_{i\in I}H_{i}\in \xi$ such that $N\subseteq^{e}H$ and $H$ does not have a proper essential extension in $M$. Therefore, $H$ is a closure of $N$ in $M$.
  \item[(2)] Let
   \begin{center}
   $\xi'=\{H_{i}| H_{i}$ be a subcomodule of $M$, $N\subseteq H_{i}$ and $G\cap H_{i}=\{0\}$ for any $i\}$.
\end{center}
The  set $\xi'$ is not empty, since $N\in \xi'$. Moreover, the set $\xi'$ contains the union of any ascending chain of its elements.
      \begin{enumerate}
      \item Using Zorn's Lemma, $\xi'$ contains a maximal element $H$ such that $N\subseteq H,G\cap H=\{0\}$ and $G\oplus H\subseteq M$.
      \item We want to show $G\oplus H\subseteq^{e}M$. Let $X$ be a $C$-subcomodule of $M$, then
      \begin{enumerate}
      \item \textbf{Case 1} : If $X\in \xi'$, then $N\subseteq X\subseteq H$, since $H$ is the maximal element. Thus, $X\cap (G+H)=X\cap G+ X\cap H\neq\{0\}$.
      \item \textbf{Case 2}: If $X$ is not in $\xi'$, then $G\cap X\neq \{0\}$. Consequently, $X\cap (G+H)\neq\{0\}$.
      \end{enumerate}
      Based on both cases, we have that $G\oplus H\subseteq^{e}M$.
      \item For any $K$ that is a $C$-subcomodule of $M$ that properly contains $H$, it is clear that $G\cap K\neq \{0\}$, since $H$ is an element maximal in $\xi'$.
      \end{enumerate}
      Assume that $H$ is not closed, i.e., $K$ is a subcomodule of $M$ such that $H\subseteq^{e}K$ and $K$ properly contains $H$. Hence, $G\cap K\neq \{0\}$.

Moreover, since $H\subseteq^{e}K$ and $G\cap K$ is a $C$-subcomodule of $K$, we have $(G\cap K)\cap H=G\cap (K\cap H)=G\cap H\neq \{0\}$. This is a contradiction to $G\cap H=\{0\}$. Consequently, $H$ is closed and $N\subseteq H$.
  \item[(3)] Let $N_{1}$ and $N_{2}$ be two subcomodules of $M$.
  \begin{enumerate}
  \item By Lemma \ref{clossubcom}, there is a closed subcomodule $M_{2}$ of $M$ such that $M_{2}\subseteq N_{2}$ and $N_{1}\cap M_{2}=\{0\}$. Hence, $M$ is an essential extension of $N_{1}\oplus M_{2}$ and $M_{1}\cap K\neq \{0\}$ for every $C$-subcomodule $K$ of $M$ properly containing $M_{2}$.
  \item Using (1), $N_{1}$ has at least one closure $M_{1}$ in $M$. Hence, $N_{1}\subseteq^{e} M_{1}$ and $N_{1}\cap M_{2}=\{0\}$. Since $M_{1}\cap M_{2}$ is a $C$-subcomodule of $M_{1}$ and $M_{1}\cap M_{2}\cap N_{1}=\{0\}$, $M_{1}\cap M_{2}=\{0\}$.
  \item Moreover, we already know that $N_{1}\oplus M_{2}\subseteq^{e}M$ and $N_{1}\oplus M_{2}\subseteq M_{1}\oplus M_{2}\subseteq M$. By Lemma \ref{lemmaessen}, we have $M_{1}\oplus M_{2}\subseteq^{e} M$.
  \end{enumerate}
\end{description}
\end{proof}

Now we bring the concept of continuous modules to the category of right $C$-comodules $\mathbf{M^{C}}$ and give the following notions:
\begin{description}
  \item[$(CM_{1})$] For every subcomodule $A\in M$, there exists a direct summand $K$ of $M$ such that $A\subseteq^{e}K$.
  \item[$(CM_{2})$] If a subcomodule $A$ of $M$ is isomorphic to a summand of $C$-subcomodule $M$, then $A$ is a summand of $M$.
  \item[$(CM_{3})$] If $N_{1}$ and $N_{2}$ are summands of $C$-comodule $M$ such that $N_{1}\cap N_{2}=\{0\}$, then $N_{1}\oplus N_{2}$ is a summand of $M$.
\end{description}

We use the above statements for defining continuous and quasi-continuous comodules.
\begin{definition}\label{contcom}
A comodule $M$ over $R$-coalgebra $C$ is called CS if it satisfies $(CM_{1})$, and $M$ is called a continuous $C$-comodule if it satisfies $(CM_{1})$ and $(CM_{2})$; $M$ is called a quasi-continuous $C$-comodule if it satisfies $(CM_{1})$ and $(CM_{3})$.
\end{definition}

The quasi-continuous module has special characteristics related to the idempotent element. The following lemma is a consequence of Lemma \ref{clossubcom}.
\begin{lemma}\label{idemcom}
Let $C$ be a flat $R$-module and a coalgebra over $R$. If $M$ is a quasi-continuous $C$-comodule, then every idempotent endomorphism of any $C$-subcomodule $M$ can be extended to an idempotent endomorphism of $M$.
\end{lemma}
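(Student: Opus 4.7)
The plan is to mirror the classical module-theoretic argument: decompose the given subcomodule as kernel-plus-image of the idempotent, use $(CM_1)$ to replace the two pieces by direct summands of $M$ that essentially contain them, use $(CM_3)$ to glue those summands together, and then take the associated projection as the desired idempotent extension.

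Let $N$ be a $C$-subcomodule of $M$ and $e\in \mathrm{End}^{C}(N)$ an idempotent. Set $A=\ker(e)$ and $B=\mathrm{Im}(e)$. Because $C$ is $R$-flat, the pure-kernel argument already used in the proof of Lemma \ref{NSCCleanCom1} shows that $A$ and $B$ are $C$-subcomodules of $N$ and that $N=A\oplus B$ is a comodule decomposition. Apply $(CM_1)$ to the two subcomodules $A,B\subseteq M$: there exist direct summands $K_{1},K_{2}$ of $M$ with $A\subseteq^{e}K_{1}$ and $B\subseteq^{e}K_{2}$.

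The first real step is to show $K_{1}\cap K_{2}=\{0\}$. Flatness of $C$ guarantees that intersections of $C$-subcomodules are again subcomodules, so $K_{1}\cap K_{2}$ and $A\cap K_{2}$ lie in $\mathbf{M}^{C}$. If $K_{1}\cap K_{2}\neq\{0\}$, then regarding it as a nonzero subcomodule of $K_{1}$, the essentiality $A\subseteq^{e}K_{1}$ forces $A\cap K_{2}\neq\{0\}$; this latter is a nonzero subcomodule of $K_{2}$, so $B\subseteq^{e}K_{2}$ yields $(A\cap K_{2})\cap B\neq\{0\}$, contradicting $A\cap B=\{0\}$. With $K_{1}\cap K_{2}=\{0\}$ in hand, $(CM_{3})$ provides that $K_{1}\oplus K_{2}$ is a direct summand of $M$, so $M=K_{1}\oplus K_{2}\oplus L$ for some $C$-subcomodule $L\subseteq M$. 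Define $\bar{e}:M\to M$ to be the projection onto $K_{2}$ along $K_{1}\oplus L$. The calculation already carried out in the proof of Proposition \ref{NSCCleanCom2} for a projection attached to a comodule direct sum shows that $\bar{e}\in \mathrm{End}^{C}(M)$, and $\bar{e}^{2}=\bar{e}$ is immediate. For $n\in N$ written uniquely as $n=a+b$ with $a\in A\subseteq K_{1}$ and $b\in B\subseteq K_{2}$, we get $\bar{e}(n)=b=e(n)$, so $\bar{e}$ extends $e$.

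The step I expect to be the main obstacle is the disjointness argument $K_{1}\cap K_{2}=\{0\}$: one must chase essentiality through two successive intersections of subcomodules and be sure at each stage that the intersection is itself an object of $\mathbf{M}^{C}$ so that essentiality applies. This is exactly where the flatness hypothesis on $C$ is indispensable; the remaining work is purely the bookkeeping combination of $(CM_{1})$, $(CM_{3})$, and the projection-is-colinear computation already in hand.
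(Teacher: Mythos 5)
Your proof is correct, and its overall skeleton matches the paper's: decompose $N$ as kernel-plus-image of $e$ using flatness of $C$, enlarge the two pieces to direct summands of $M$ via $(CM_{1})$, combine those summands with $(CM_{3})$, and take the associated projection as the idempotent extension. Where you genuinely diverge is in how the independence of the two summands is secured. The paper first invokes Lemma \ref{clossubcom} to produce \emph{closed} subcomodules $Q_{1}\supseteq \mathrm{Im}(e)$ and $Q_{2}\supseteq \mathrm{Im}(I_{N}-e)$ with $Q_{1}\cap Q_{2}=\{0\}$ built into the construction, and then uses $(CM_{1})$ together with closedness to identify $Q_{1},Q_{2}$ with direct summands of $M$; disjointness comes for free from the closed-complement machinery. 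You instead apply $(CM_{1})$ directly to $A=\mathrm{Ker}(e)$ and $B=\mathrm{Im}(e)$ and prove $K_{1}\cap K_{2}=\{0\}$ by a two-step essentiality chase: a nonzero $K_{1}\cap K_{2}$ would force $A\cap K_{2}\neq\{0\}$ by essentiality of $A$ in $K_{1}$, and then $A\cap B\neq\{0\}$ by essentiality of $B$ in $K_{2}$, a contradiction. That argument is valid, and you correctly flag that flatness of $C$ is what keeps each successive intersection inside $\mathbf{M}^{C}$ so that essentiality can be applied to it. Your route is more self-contained, since it bypasses the Zorn's-Lemma construction of Lemma \ref{clossubcom} entirely, whereas the paper's route yields closed summands as a by-product; both arguments are complete, and yours is arguably the shorter path to this particular lemma.
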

\begin{proof}
Let $N$ be a $C$-subcomodule of a quasi-continuous module $M$ and let $e\in End^{C}(N)$ with $C$ be flat. Then by using the same argument as an Lemma \ref{NSCCleanCom1}, we have
\begin{equation*}
N=Im(e)\oplus Im(I_{N}-e)
\end{equation*}
is a $C$-subcomodule decomposition. Put $N_{1}=Im(e)$ and $N_{2}=Im(I_{N}-e)$ such that $N=N_{1}\oplus N_{2}$. By Lemma \ref{clossubcom}, there exist closed subcomodules $Q_{1},Q_{2}\subseteq M$ such that $N_{1}\subseteq Q_{1}$ and $N_{2}\subseteq Q_{2}, Q_{1}\cap Q_{2}=\{0\}$ and $Q_{1}\oplus Q_{2}\subseteq^{e}M$. Since $M$ is quasi-continuous, we have the following:
\begin{enumerate}
\item based on $(CM_{1})$, there are direct summands $K_{1}, K_{2}$ of $M$ such that $Q_{1}\subseteq^{e}K_{1}$ and $Q_{2}\subseteq^{e}K_{2}$;
\item since $Q_{1}, Q_{2}$ are closed,  $Q_{1}=K_{1}$ and $Q_{2}=K_{2}$ are direct summands of $M$;
\item based on $(CM_{3})$, $Q_{1}\oplus Q_{2}$ is also a summand of $M$ or $M=Q_{1}\oplus Q_{2}\oplus Q_{3}$ for a $C$-subcomodule $Q_{3}$ of $M$.
\end{enumerate}
Take $M=Q_{1}\oplus Q_{2}\oplus Q_{3}$ for a $C$-subcomodule $Q_{3}\subseteq M$. Therefore, there is a projection map
\begin{center}
$\pi_{Q_{1}}:M\rightarrow Q_{1}$
\end{center}
where the kernel of $\pi_{Q_{1}}$ is $Q_{2}\oplus Q_{3}$ and the inclusion $\iota_{Q_{1}}:Q_{1}\rightarrow M$ is a $C$-comodule morphism.

Now we construct the map
\begin{center}
$e'=\iota_{Q_{1}}\circ \pi_{Q_{1}}:M\rightarrow Q_{1}\rightarrow M, m\mapsto \iota_{Q_{1}}\circ\pi_{Q_{1}}(m)$.
\end{center}
\begin{enumerate}
\item The map $e'$ is an idempotent, i.e., for any $m=q_{1}+q_{2}+q_{3}\in M$, where $q_{i}\in Q_{i}$ for $i=1,2,3$, we have:
\begin{align*}
e'\circ e'(m)&=e'(e'(q_{1}+q_{2}+q_{3}))\\
&=e'(\pi_{Q_{1}}(q_{1}+q_{2}+q_{3}))\\
&=e'(q_{1})\\
&=e'(q_{1}+0+0)\\
&=q_{1}
\end{align*}
and $e'(m)=e'(q_{1}+q_{2}+q_{3})=q_{1}$.
\item We prove that $e'$ is a $C$-comodule morphism, i.e., $\varrho^{M}\circ e'=(e'\otimes I_{C})\circ \varrho^{M}$. Take any $m=q_{1}+q_{2}+q_{3}\in M$ where $q_{i}\in Q_{i}$ for $i=1,2,3$, then we have
    \begin{align*}
    \varrho^{M}\circ e'(m)&=\varrho^{M}\circ e' (q_{1}+q_{2}+q_{3})\\
    &=\varrho^{M}\circ \iota_{Q_{1}}\circ \pi_{Q_{1}}(q_{1}+q_{2}+q_{3})\\
    &=\varrho^{M}(q_{1})\\
    &=\sum q_{1\underline{0}}\otimes q_{1\underline{1}}
    \end{align*}
    and
    \begin{align*}
    (e'\otimes I_{C})\circ \varrho^{M}(m)&=(e'\otimes I_{C})\circ \varrho^{M}(q_{1}+q_{2}+q_{3})\\
    &=(e'\otimes I_{C})\circ(\varrho^{M}(q_{1})+\varrho^{M}(q_{2}+q_{3}))\\
    &=((e'\otimes I_{C})\circ\varrho^{M})(q_{1})+((e'\otimes I_{C})\circ\varrho^{M})(q_{2}+q_{3})\\
    &=(e'\otimes I_{C})\circ\varrho^{M}(q_{1})+0\\
    &\text{( since $Q_{2}+Q{3}$ is a $C$-subcomodule of $M$ and also-)}\\
    &\text{( kernel of $\pi_{Q_{1}}$)}\\
    &=(\iota_{Q_{1}}\circ\pi_{Q_{1}}\otimes I_{C})(\sum q_{1\underline{0}}\otimes q_{1\underline{1}})\\
    &=\sum q_{1\underline{0}}\otimes q_{1\underline{1}} \text{( since $Q_{1}$ is a $C$-suncomodule of $M$)}
    \end{align*}
\end{enumerate}
Hence, $e'=\iota_{Q_{1}}\circ \pi_{Q_{1}}$ is a $C$-comodule endomorphism of $M$. In particular, any idempotent $e\in End^{C}(N)$ can be extended to the idempotent $C$-comodule morphism of $M$, i.e., $e'$.
\end{proof}

In this section, we have some valuable result modifications of \cite{cam06}, which can be used in the comodule situation.

\section{Clean Continuous Comodules}
Following a similar result in module theory, i.e., any continuous modules are clean \cite{cam06}, we investigate whether any continuous comodules are clean. We begin this section by proving some preliminary lemmas.

Related to clean modules, some researchers have investigated cleanness of the endomorphisms of vector space (over a field or a division ring), for example, \cite{sear}. On the first lemma, we show that the endomorphisms of a $C$-comodule that is a direct sum of $n$ $C$-comodules are clean.
\begin{lemma} Let $M$ be a comodule over a coalgebra $C$ and $L=\oplus_{n\geq0}M_{n}$, where $M=M_{n}$ for each $n$. For any $m\in M$, write $m_{n}$ for the element $m$ lying in $M_{n}=M$ and define the (forward) "shift operator" $f$ on $L$ by $f(m_{n})=m_{n+1}$ for all $n$. Then $f$ is clean in $End^{C}(L)$
\end{lemma}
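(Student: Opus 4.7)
The plan is to exhibit explicit maps $e\in End^{C}(L)$ (idempotent) and $u\in End^{C}(L)$ (unit) with $f=e+u$, giving a clean decomposition directly (without invoking Proposition \ref{NSCCleanCom2}).

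My guiding idea is the block decomposition $L=L_{0}\oplus L_{1}$, where $L_{0}=\bigoplus_{k\geq0}M_{2k}$ and $L_{1}=\bigoplus_{k\geq0}M_{2k+1}$ are both $C$-subcomodules of $L$, canonically isomorphic to $L$ itself. Under these identifications the shift $f$ takes the ``swap'' form $\left(\begin{smallmatrix}0 & 1\\ f & 0\end{smallmatrix}\right)$ acting on $L_{1}\oplus L_{0}$, and one checks the matrix identity
\[
\begin{pmatrix}0 & 1\\ f & 0\end{pmatrix}=\begin{pmatrix}1 & 0\\ f-1 & 0\end{pmatrix}+\begin{pmatrix}-1 & 1\\ 1 & 0\end{pmatrix},
\]
in which the first summand is manifestly idempotent and the second is invertible with inverse $\left(\begin{smallmatrix}0 & 1\\ 1 & 1\end{smallmatrix}\right)$. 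Translating this back to $L$ prescribes
\begin{equation*}
e(m_{2k})=0,\qquad e(m_{2k+1})=-m_{2k}+m_{2k+1}+m_{2k+2},
\end{equation*}
and $u:=f-e$, whence $u(m_{2k})=m_{2k+1}$ and $u(m_{2k+1})=m_{2k}-m_{2k+1}$.

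I then need three verifications. First, $e^{2}=e$: both sides vanish on even generators, and on odd generators $e^{2}(m_{2k+1})=-e(m_{2k})+e(m_{2k+1})+e(m_{2k+2})=e(m_{2k+1})$. Second, $e$ and $u$ are $C$-comodule morphisms, since each is an $R$-linear combination of the canonical identifications $M_{n}\cong M_{n'}$ between copies of $M$, and the coaction on every $M_{n}$ is $\varrho^{M}$; the coaction compatibility then follows by the same calculation used for $1-e$ in the proof of Lemma \ref{NSCCleanCom1}. Third, $u$ is a unit: the map $v$ given by $v(m_{2k})=m_{2k}+m_{2k+1}$ and $v(m_{2k+1})=m_{2k}$ satisfies $uv=vu=I_{L}$ on generators, which is a two-line check.

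The main obstacle is discovering the right $e$. Naive idempotents --- projection onto $M_{0}$, projection onto $L_{0}$, and so on --- make $u=f-e$ injective but not surjective, because the shift $f$ is only left-invertible in $End^{C}(L)$: its image misses $M_{0}$, so no ``small'' idempotent correction can produce a bijection. The terms $-m_{2k}$ and $+m_{2k+2}$ inside $e(m_{2k+1})$ supply exactly the extra degree of freedom that repairs surjectivity, and they are precisely the content of the off-diagonal entry $f-1$ in the block representation above.
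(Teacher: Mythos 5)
Your proposal is correct and takes essentially the same route as the paper: an explicit clean decomposition $f=e+u$ built on the even/odd pairing of the summands, where $e$ is an $R$-linear combination of the canonical identifications $M_{n}\cong M_{n'}$ (hence automatically a $C$-comodule morphism) and $u$ satisfies $u^{2}=I_{L}-u$, so $u^{-1}=I_{L}+u$ (your $v$). The paper merely picks a slightly different idempotent, $e(m_{2n})=m_{2n}$ and $e(m_{2n+1})=m_{2n+2}-m_{2n}$, following \cite{sear}, but the verifications and the overall structure of the argument coincide with yours.
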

\begin{proof}
Suppose that $L=\oplus_{n\geq0}M_{n}$ is a $C$-comodule, since the coproduct of $C$-comodules is a $C$-comodule \cite{wisbauer}. Take any $f\in End^{C}(L)\subseteq End_{R}(L)$, i.e.,
\begin{center}
$f:L\rightarrow L$ or $f:\oplus_{n\geq0}M_{n}\rightarrow \oplus_{n\geq0}M_{n}$
\end{center}
where $f(\sum_{n\geq0}m_{n})=\sum_{n\geq0}m_{n+1}$.
In modules theory, \cite{sear} has already proved this lemma by choosing $R$-module endomorphism of $\oplus_{n\geq0}M_{n}$ such that $f=u+e$ for a unit $u$ and an idempotent $e$ in $End_{R}(L)$. By using a similar argument, we prove that the following $e$ is an $C$-comodule endomorphism.
\begin{center}
$e:\oplus_{n\geq0}M_{n}\rightarrow \oplus_{n\geq0}M_{n}$
\end{center}
in $End_{R}(L)$ defined as $e(\sum_{n\geq0}m_{n})=\sum_{n\geq0}e(m_{n})$ where
\begin{center}
  $e(m_{2n})=m_{2n}$ and $e(m_{2n+1})= m_{2n+2}-m_{2n}$, for all $n\geq 0$.
\end{center}
Clearly, $e$ is an idempotent element in $End_{R}(L)$. Moreover, we want to prove $e\in End^{C}(L)$. For any $\sum_{n\geq0}m_{n}\in L$,
\begin{align*}
\varrho^{L}\circ e(\sum_{n\geq0}m_{n})&=\varrho^{L}(m_{0},m_{4}-m_{2},m_{2},m_{8}-m_{6},m_{4},...,m_{2n},m_{2n+2}-m_{2n},....)\\
&=(\varrho^{M}(m_{0}),\varrho^{M}(m_{4}-m_{2}),\varrho^{M}(m_{2}),\varrho^{M}(m_{8}-m_{6}),\varrho^{M}(m_{4}),...,\\
&\hspace{0,5cm}\varrho^{M}(m_{2n}),\varrho^{M}(m_{2n+2}-m_{2n}),....)\\
&=(m_{0\underline{0}}\otimes m_{0\underline{1}},.......,m_{2n\underline{0}}\otimes m_{2n\underline{1}},m_{(2n+2)\underline{0}}-m_{2n\underline{0}}\otimes m_{(2n+2)\underline{1}}\\
&\hspace{0,5cm}-m_{2n\underline{1}},...)\\
&=((e\otimes I_{C})(m_{0\underline{0}}\otimes m_{0\underline{1}}),.......,(e\otimes I_{C})(m_{2n\underline{0}}\otimes m_{2n\underline{1}}),(e\otimes I_{C})\\
&\hspace{0,5cm}(m_{(2n+1)\underline{0}}\otimes m_{2n+1\underline{1}},...))\\
&=(e\otimes I_{C})((m_{0\underline{0}}\otimes m_{0\underline{1}}),.......,(m_{2n\underline{0}}\otimes m_{2n\underline{1}}),\\
&\hspace{0,5cm}(m_{2n+1\underline{0}}\otimes m_{2n+1\underline{1}},...))\\
&=(e\otimes I_{C})\circ \varrho^{M}(\sum_{n\geq0}m_{n}).
\end{align*}
Therefore, $e$ is an idempotent in $End^{C}(L)$.
Moreover, a $C$-comodule morphism $f:L\rightarrow L$ where $f(m_{n})=m_{n+1}$ in $End^{C}(L)$ and $u:=f-e$. Thus we have:
\begin{align*}
u(m_{2n})&=(f-e)(m_{2n})\\
&=f(m_{2n})-e(m_{2n})\\
&=m_{2n+1}-m_{2n}\\
\end{align*}
and
\begin{align*}
u(m_{2n+1})&=(f-e)(m_{2n+1})\\
&=f(m_{2n+1})-e(m_{2n+1})\\
&=m_{2n+2}-(m_{2n+2}-m_{2n})\\
&=m_{2n}.
\end{align*}
That is, for all $n\geq0$ we have
\begin{align*}
u\circ u (m_{2n})&=u(m_{2n+1}-m_{2n})\\
&=u(m_{2n+1})- u(m_{2n})\\
&=m_{2n}-(m_{2n+1}-m_{2n})\\
&=I_{L}(m_{2n})-u(m_{2n})\\
&=(I_{L}-u)(m_{2n})
\end{align*}
and
\begin{align*}
u\circ u (m_{2n+1})&=u(m_{2n})\\
&=m_{2n+1}-m_{2n}\\
&=I_{L}(m_{2n+1})-u(m_{2n+1})\\
&=I_{L}-u(m_{2n+1}).
\end{align*}
Therefore, $u\circ u=I_{L}-u$ if and only if $u(u+I_{L})=I_{L}$ for all $n\geq 0$. In particularly, $u$ is unit in $End^{C}(L)$, so $f=e+u$ is clean or $L$ is a clean $C$-comodule.
\end{proof}
Based on Lemma 3.3 and Remark 3.4 in \cite{cam06} we construct the set of all ordered pairs of $C$-subcomodules $M$ and define the invariant $f$-subcomodule. Let $f\in End_{R}(M)$. An $R$-submodule $N\subseteq M$ is said to be $f$-invariant if $f(N)\subseteq N$. Based on this concept we define the $f$-invariant subcomodules as follows.
\begin{definition} Let $M$ be a $C$-comodule and $f\in End^{C}(M)$. A $C$-subcomodule $W\subseteq M$ is said to be $f$-invariant if $f(W)\subseteq W$.
\end{definition}
Moreover, we define an essential monomorphism and co-Hopfian comodule.
\begin{definition}
Let $M$ be a $C$-comodule.
\begin{enumerate}
\item A monomorphism $f\in End^{C}(M)$ is called an \emph{essential monomorphism} if $Im(f)\subseteq^{e}M$.
\item $M$ is called \emph{a co-Hopfian} (resp. an essential co-Hopfian) if every monomorphism (resp. essential monomorphism) in $End^{C}(M)$ is onto.
\end{enumerate}
\end{definition}

Consider the dual algebra $C^{\ast}=Hom_{R}(C,R)$ by the convolution product (see Equation \ref{convo}). In \cite{wisbauer}, any $C$-comodule $M$ is a $C^{\ast}$-module. For any  $0\neq m$ in a right $C$-comodule $(M,\varrho^{M})$, we construct the set of $C^{\ast}\rightharpoonup m$ i.e.,
\begin{equation}\label{starset}
  C^{\ast}\rightharpoonup m =\{f\rightharpoonup m|f \in C^{\ast}\}
\end{equation}
where $f\rightharpoonup m=(I_{m}\otimes f)\circ \varrho^{M}(m)$. In \cite{wisbauer}, the category of $\mathbf{M}^{C}$ is a subcategory of $_{C^{\ast}}\mathbf{M}$ and it is become a full subcategory if and only $C$ satisfies the $\alpha$-condition. In this paper, we give a weaker condition than the $\alpha$-condition. For any $0\neq m\in M$, define a map
\begin{equation}\label{alphac}
\alpha_{M/C^{\ast}\rightharpoonup m}:M/C^{\ast}m\otimes_{R}C\rightarrow Hom_{R}(C^{\ast},M/C^{\ast}\rightharpoonup m), x\otimes c\mapsto [f\mapsto f(c)x].
\end{equation}
An $R$-coalgebra $C$ satisfies the $\alpha^{\ast}$-condition if the map $\alpha_{M/C^{\ast}\rightharpoonup m}$ is injective. Hence, if $C^{\ast}\rightharpoonup m$ is a $C$-pure $R$-submodule of $M$ or $C$ is a flat $R$-module, then the set $C^{\ast}\rightharpoonup m$ will always to be a $C$-subcomodule of $M$.

\begin{proposition}\label{essenmono} Let $C$ be a flat $R$-module and $C$ an $R$-coalgebra. Let $M$ be a quasi-continuous $C$-comodule, $W\subseteq^{e} M$, $f\in End^{C}(M)$ such that $f(W)\subseteq W$. If $f|_{W}-e$ is an essential monomorphism in $End^{C}(W)$ for some idempotent $e\in End^{C}(W)$, then there exists an idempotent $e'\in End^{C}(M)$ such that $e'|_{W}=e$ and $f-e'$ is an essential monomorphism in $End^{C}(M)$.
\end{proposition}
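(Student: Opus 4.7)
The plan is to first extend the idempotent $e \in End^{C}(W)$ to an idempotent $e' \in End^{C}(M)$ that still agrees with $e$ on $W$, and then to verify separately that $f - e'$ is injective on $M$ and that its image is essential in $M$. These two conditions together say that $f - e'$ is an essential monomorphism in $End^{C}(M)$, which is what is required.

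For the extension step, I would invoke Lemma \ref{idemcom} applied to $e \in End^{C}(W)$. The construction in its proof decomposes $W = W_{1}\oplus W_{2}$ with $W_{1}=Im(e)$ and $W_{2}=Ker(e)$, uses Lemma \ref{clossubcom} to produce closed subcomodules $Q_{1}\supseteq W_{1}$ and $Q_{2}\supseteq W_{2}$ of $M$ with $Q_{1}\cap Q_{2}=\{0\}$, then writes $M=Q_{1}\oplus Q_{2}\oplus Q_{3}$ via $(CM_{3})$, and sets $e'=\iota_{Q_{1}}\circ \pi_{Q_{1}}$. Because $W_{1}\subseteq Q_{1}$ and $W_{2}\subseteq Q_{2}\subseteq Ker(\pi_{Q_{1}})$, the map $e'$ acts as the identity on $W_{1}$ and as zero on $W_{2}$; hence $e'|_{W}=e$. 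Combined with the $f$-invariance $f(W)\subseteq W$, this yields the crucial identity $(f-e')|_{W}=f|_{W}-e$.

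Given this identity, the remaining assertions fall out quickly. For injectivity, let $K=Ker(f-e')$; flatness of $C$ makes $K$ a $C$-subcomodule of $M$, and $K\cap W\subseteq Ker(f|_{W}-e)=\{0\}$ because $f|_{W}-e$ is a monomorphism by hypothesis, so the essentiality $W\subseteq^{e}M$ forces $K=\{0\}$. For essentiality of the image, $(f-e')(W)=(f|_{W}-e)(W)\subseteq^{e}W$ by hypothesis and $W\subseteq^{e}M$, so Lemma \ref{lemmaessen} gives $(f-e')(W)\subseteq^{e}M$; since $(f-e')(W)\subseteq Im(f-e')\subseteq M$, a second application of Lemma \ref{lemmaessen} yields $Im(f-e')\subseteq^{e}M$. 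The main obstacle is really the first step: Lemma \ref{idemcom} is stated only as bare existence of an idempotent extension, whereas we need the explicit equality $e'|_{W}=e$; one therefore has to either unpack the proof of Lemma \ref{idemcom} to extract this agreement, or re-run its argument in situ with $(W_{1},W_{2})$ in the role of $(N_{1},N_{2})$ so that the direct-sum decomposition of $M$ splits correctly on $W$.
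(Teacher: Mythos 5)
Your proposal is correct and follows the paper's overall architecture: extend $e$ to an idempotent $e'$ on $M$ via Lemma \ref{idemcom}, observe $(f-e')|_{W}=f|_{W}-e$, and then obtain essentiality of $Im(f-e')$ exactly as the paper does, by two applications of Lemma \ref{lemmaessen} to the chain $Im(f|_{W}-e)\subseteq^{e}W\subseteq^{e}M$ and $Im(f|_{W}-e)\subseteq Im(f-e')\subseteq M$. Where you diverge is the injectivity step. The paper argues via cyclic subcomodules: it invokes the $\alpha^{\ast}$-condition (or flatness) to make $C^{\ast}\rightharpoonup x$ a subcomodule of $M$ for $x\neq 0$, uses $W\subseteq^{e}M$ to find $0\neq g\rightharpoonup x\in W$, and then kills $g\rightharpoonup x$ by the injectivity of $f|_{W}-e$ (implicitly using that $f-e'$ is $C^{\ast}$-linear so that $(f-e')(x)=0$ forces $(f-e')(g\rightharpoonup x)=0$). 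You instead take $K=Ker(f-e')$, note it is a subcomodule by flatness of $C$, observe $K\cap W\subseteq Ker(f|_{W}-e)=\{0\}$, and conclude $K=0$ from $W\subseteq^{e}M$. Your version is more economical and, notably, does not need the $\alpha^{\ast}$-condition, which is not among the proposition's stated hypotheses (the paper's proof quietly assumes it); it also completes a step the paper leaves slightly elliptical, since showing $(f-e')(g\rightharpoonup x)=0\Rightarrow g\rightharpoonup x=0$ is not by itself the full injectivity claim. Finally, your caveat about Lemma \ref{idemcom} is well taken: its statement only asserts existence of an idempotent extension, and the equality $e'|_{W}=e$ must be read off from the construction ($e'$ is the projection onto $Q_{1}\supseteq Im(e)$ with $Ker(e)\subseteq Q_{2}\oplus Q_{3}=Ker(e')$); the paper relies on the same unpacking without comment.
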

\begin{proof}
From assumption, $Im(f|_{W}-e)\subseteq^{e}W$. Since $M$ is a quasi-continuous $C$-comodule, we may extend $e$ as an idempotent on $M$ (see Lemma \ref{idemcom}). There is $e'\in End^{C}(M)$ such that $e'|_{W}=e$ and $f-e\in End^{C}(M)$. Now, we show that $f-e'$ is an essential monomorphism in $End^{C}(M)$.
\begin{enumerate}
\item Since $C$ satisfies the $\alpha^{\ast}$-condition, for any $x\in M, C^{\ast}\rightharpoonup x$ is a $C$-subcomodule of $M$.
\item Suppose that $W\subseteq^{e} M$. Clearly that $C^{\ast}\rightharpoonup x\cap W\neq 0$. It means there exists $0\neq g \in C^{\ast}$ such that $0\neq g\rightharpoonup x\in W$.
\item The monomorphism property of $f|_{W}-e$ implies that:
\begin{align*}
(f-e')(g\rightharpoonup x)&=f(g\rightharpoonup x)-e'(g\rightharpoonup x)\\
&=f|_{W}(g\rightharpoonup x)-e'|_{W}(g\rightharpoonup x)\\
&=f|_{W}(g\rightharpoonup x)-e(g\rightharpoonup x)\\
&=(f|_{W}-e)(g\rightharpoonup x).
\end{align*}
If $(f-e')(g\rightharpoonup x)=0$, then implies $(f|_{W}-e)(g\rightharpoonup x)=0$. By the injectivity of $(f|_{W}-e)$, $g\rightharpoonup x$ must be zero. Therefore, $f-e'$ is a monomorphism in $End^{C}(M)$.
\item Furthermore, we assumed that $Im(f|_{W}-e)\subseteq^{e}W$ and $W\subseteq^{e}M$. Thus, $Im(f|_{W}-e)$ is an essential $C$-subcomodule of $M$ (Lemma \ref{lemmaessen}).
\item Since $Im(f|_{W}-e)\subseteq Im(f-e')\subseteq M$ and $Im(f|_{W}-e)\subseteq^{e} M$, $Im(f-e')\subseteq^{e}M$. Consequently, $Im(f-e')$ is an essential monomorphism in $End^{C}(M)$ (Lemma \ref{lemmaessen}).
\end{enumerate}
\end{proof}

On the Proposition \ref{essenmono}, if we are assuming that $M$ is an essentially co-Hopfian comodule, then $f$ is onto. Thus, $f-e'$ is unit in $End^{C}(M)$. In particular, $f$ is clean.

\begin{remark}\label{remark1}
Let $M$ be a $C$-comodule and $f\in End^{C}(M)$. Let
\begin{center}
$\xi_{f}:=$ the set of all ordered pairs $(W,e)$
\end{center}
such that $W$ is an $f$-invariant $C$-subcomodule of $M$ and $e\in End^{C}(M)$ is an idempotent such that $f|_{W}-e$ is a unit in $End^{C}(W)$, or equivalently $f|_{W}$ is clean. As $(0,0)\in \xi_{f}$, the set $\xi_{f}$ is not empty. Let $(W_{1},e_{1}),(W_{2},e_{2})\in \xi_{f}$. We define a partial ordering by setting $(W_{1},e_{1})\leq (W_{2},e_{2})$ if and only if $W_{1}\subseteq W_{2}$ and $e_{2}|_{W_{1}}=e_{1}$. Thus, any totally ordered set $\{(W_{i},e_{i})|i\in I\}$ is bounded above by $(N,e)$ where $N=\cup_{i\in I}W_{i}$ and $e(x)=e_{i}(x)$ for all $x\in W_{i}$. By Zorn's Lemma, any $(W_{0},e_{0})\in \xi_{f}$ is bounded by a maximal element of $\xi_{f}$. It is clear that $f$ is clean in $End^{C}(M)$ if and only if there is element $(W,e)$ in $\xi_{f}$ with $W=M$.
\end{remark}

We use the remark to prove the cleanness of continuous comodules on the main Theorem.

To understand the structure of $\xi_{f}$, we observe the characteristic of the maximal elements on $\xi_{f}$.
\begin{lemma}\label{35} Let $C$ be a flat $R$-module and $C$ be an $R$-coalgebra with the $\alpha^{\ast}$-condition. If $f\in End^{C}(M), (W,e)$ is a maximal element in $\xi_{f}$, and $X$ is a $C$-subcomodule of $M$ where $X\cap W=0$, then we have the following statements:
\begin{description}
  \item[(A)] For any $x\in X$ if $f(x)\in W$, then $x=0$;
  \item[(B)] For any $m\in W\oplus X$ if $f(m)\in W$, then $m\in W$.
\end{description}
\end{lemma}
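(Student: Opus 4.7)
My plan is to prove (A) by contradiction, using the maximality of $(W,e)$, and then deduce (B) directly from (A). For (A), I would suppose that there exists $0 \neq x \in X$ with $f(x) \in W$, and construct a strictly larger pair $(W',e') \in \xi_f$, contradicting the maximality of $(W,e)$.

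The construction: set $V := C^{\ast} \rightharpoonup x$, which is a $C$-subcomodule of $M$ by the $\alpha^{\ast}$-condition together with the flatness of $C$. Since $V \subseteq X$ and $X \cap W = \{0\}$, the sum $W' := W \oplus V$ is a well-defined $C$-subcomodule. I would first check that $W'$ is $f$-invariant: for $w \in W$, $f(w) \in W \subseteq W'$, and for any $v = g \rightharpoonup x \in V$, the $C$-comodule morphism property of $f$ gives $f(v) = g \rightharpoonup f(x)$, which lies in $W$ because $f(x) \in W$ and $W$ is stable under $C^{\ast}$-action.

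The key step is defining $e' \in End^{C}(W')$ by $e'(w+v) := e(w) + v$. This is a $C$-comodule morphism since $e$ and $I_{V}$ are, and it is clearly idempotent. A direct computation then yields
\begin{equation*}
(f|_{W'}-e')(w+v) = (f-e)(w) + f(v) - v,
\end{equation*}
where $(f-e)(w) + f(v) \in W$ and $-v \in V$. To see that $f|_{W'}-e'$ is bijective, note that the $V$-component of the image is $-v$, which is a bijection of $V$; for each fixed $v$, the $W$-component $w \mapsto (f-e)(w) + f(v)$ is a bijection of $W$ since $(f-e)|_{W}$ is a unit in $End^{C}(W)$. The inverse of a bijective $C$-comodule morphism is automatically a $C$-comodule morphism, so $f|_{W'} - e'$ is a unit in $End^{C}(W')$. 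Since $\varepsilon \rightharpoonup x = x$, we have $x \in V \subseteq W'$, but $x \notin W$, so $W \subsetneq W'$; combined with $e'|_{W} = e$, this gives $(W,e) < (W',e')$ in $\xi_{f}$, contradicting maximality. Hence $x=0$.

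Part (B) then follows immediately: writing $m = w + x$ with $w \in W$ and $x \in X$, the assumption $f(m) \in W$ together with $f(w) \in W$ (from $f$-invariance of $W$) forces $f(x) \in W$, so $x=0$ by (A) and $m = w \in W$. The main obstacle I anticipate is choosing the right extension $e'$ on $V$: the naive choice $e'|_{V} = 0$ would send $W'$ into $W$ and fail surjectivity; the correct trick is $e'|_{V} = I_{V}$, which makes the $V$-component of $f-e'$ equal to $-I_{V}$ and produces a bijection on $W'$.
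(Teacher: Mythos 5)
Your proposal is correct and follows essentially the same route as the paper: both arguments form $W\oplus(C^{\ast}\rightharpoonup x)$, extend $e$ by the identity on the new summand, verify that $f-e'$ is a unit there, and contradict the maximality of $(W,e)$, with (B) reduced to (A) via $f$-invariance of $W$. Your triangular-block justification of bijectivity and the explicit use of $\varepsilon\rightharpoonup x=x$ to see $x\in C^{\ast}\rightharpoonup x$ are minor streamlinings of the paper's more computational onto/injective checks.
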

\begin{proof}
\begin{description}
  \item[(A)] Suppose that $x\in X$ and $f(x)\in W$. Put $w:= f(x)\in W$ and $X'=C^{\ast}\rightharpoonup x\subseteq X$. Therefore,
   \begin{enumerate}
   \item Since $C$ satisfies the $\alpha^{\ast}$-condition, $X=C^{\ast}\rightharpoonup x$ is a $C$-subcomodule of $M$.
   \item We want to prove that $W\oplus X'$ is an $f$-invariant as below:
   \begin{align*}
   f(W+X')&=f(W)+f(X')\\
   &\subseteq W\oplus f(C^{\ast} \rightharpoonup x) \text{( since $W$ is $f$-invariant)}\\
   &= W\oplus C^{\ast}\rightharpoonup f(x) \text{( by scalar muliplication of $C^{\ast}$)}\\
   &= W\oplus C^{\ast}\rightharpoonup w\\
   &\subseteq W (\text{( since $W$ is a $C^{\ast}$-module)})
   \end{align*}
   \end{enumerate}
   Consequently, $C$-comodule $W\oplus X'$ is $f$-invariant. Moreover, we also have some facts as below:
      \begin{enumerate}
      \item We can extends the idempotent endomorphism $e\in End^{C}(W)$ which is $(W,e)\in \xi_{f}$ to endomorphism of $X'$ by define $e(\alpha\rightharpoonup x)=\alpha\rightharpoonup e(x)=\alpha\rightharpoonup x$ for any $\alpha \rightharpoonup x\in X'$.
     \item For any $w'+\alpha \rightharpoonup x\in W\oplus X'$, we have:
      \begin{align*}
      e\circ e(w'+\alpha\rightharpoonup x)&=e\circ e(w')+e\circ e(\alpha \rightharpoonup x)\\
      &=e(w')+e(e(\alpha \rightharpoonup x))\\
      &=e(w')+e(\alpha \rightharpoonup e(x))\\
      &=e(w')+e(\alpha\rightharpoonup x)\\
      &=e(w')+\alpha\rightharpoonup e(x)\\
      &=e(w'+\alpha\rightharpoonup x).
      \end{align*}
      Hence, $e$ is an idempotent in $End^{C}(W\oplus X')$.
      \item Let $f-e:W\oplus X'\rightarrow W\oplus X'$ is a $C$-comodule morphism. We need to check that $(f-e)W=W$. Since $(W,e)\in \xi_{f}$, $(f-e)|_{W}$ is unit. That is, it is an automorphism and implies $(f-e)(W)\simeq W$. We will to continue our observation to prove $f-e\in End^{C}(W\oplus X')$ is also an automorphism as below:
      \begin{enumerate}
      \item We want to prove that $f-e:W\oplus X'\rightarrow W\oplus X'$ is onto. By using the equation $(f-e)(W)\simeq W$, for $w=f(x)\in W$ there is $w_{1}\in W$ such that $w=(f-e)(w_{1})$. Then we have,
          \begin{align*}
          (f-e)(w_{1}+ x)&=(f-e)(w_{1})-(f-e)( x)\\
          &=w-f(x)+e(x)\\
          &=f(x)-f(x)+e(x)\\
          &=x.
          \end{align*}
          Therefore, for any $w'+\alpha\rightharpoonup x\in W\oplus X'$
          \begin{align*}
          w'+\alpha\rightharpoonup x&=(f-e)(w_{2})+\alpha ((f-e)(w_{1}+ x)), \text{ (for some $w_{2}\in W)$}\\
          &=(f-e)(w_{2})+(f-e)(\alpha\rightharpoonup w_{1}+\alpha\rightharpoonup x)\\
          &=(f-e)(w_{2})+(f-e)(\alpha\rightharpoonup w_{1})+(f-e)(\alpha\rightharpoonup x)\\
          &=(f-e)(w_{2}+\alpha\rightharpoonup w_{1})+(f-e)(\alpha\rightharpoonup x).\\
          &=(f-e)((w_{2}+\alpha\rightharpoonup w_{1})+\alpha\rightharpoonup x)
          \end{align*}
          This means or any $w'+\alpha\rightharpoonup x\in W\oplus X'$ there exist $(w_{2}+\alpha\rightharpoonup w_{1})+\alpha\rightharpoonup x\in W\oplus X'$ such that $w'+\alpha\rightharpoonup x=(f-e)((w_{2}+\alpha\rightharpoonup w_{1})+\alpha\rightharpoonup x)$. In particularly, $W\oplus X'\in Im(f-e)$ or $f-e$ is onto.

      \item Next, suppose that $(f-e)(w_{1}+\alpha \rightharpoonup x)=0$ for some $w_{1}+\alpha \rightharpoonup x\in W\oplus X'$, then
       \begin{align*}
       0=(f-e)(w_{1}+\alpha \rightharpoonup x)&=(f-e)(w_{1})+ (f-e)(\alpha \rightharpoonup x)\\
       &=w_{2}+f(\alpha\rightharpoonup x)-e(\alpha \rightharpoonup x)\\
       &\text{ (for some $w_{2}=(f-e)(w_{1})\in W$)}\\
       &=(w_{2}+\alpha \rightharpoonup w)-\alpha \rightharpoonup x \text{ (since $w=f(x)$)}\\
       &=(w_{2}+\alpha \rightharpoonup w)-\alpha \rightharpoonup x
       \end{align*}
       Hence, we have that $\alpha \rightharpoonup x=w_{2}+\alpha \rightharpoonup w$ and implies $\alpha \rightharpoonup x\in W\cap X'$. Since $W\cap X=W\cap X'=\{0\}$, $\alpha \rightharpoonup x=0$. Then, $w_{2}+\alpha \rightharpoonup x=w_{1}$. Thus, $(f-e)(w_{2}+\alpha \rightharpoonup x)=(f-e)(w_{1})=0$. Moreover, since $(f-e)|_{W}$ is an automorphism implies that $w_{1}$ must be zero and $w_{1}+\alpha \rightharpoonup x=0$. In particularly, $f-e\in End^{C}(W+X')$ is a monomorphism.
       \end{enumerate}
       From this point the conclusion is $f-e\in End^{C}(W+X')$ is an automorphism.
       \item From the previous point for $C$-subcomodule $W\oplus X'$, we have that
       \begin{enumerate}
       \item $(f-e)|_{W\oplus X'}$ is an automorphism (unit) and $f$-invariant.
       \item If $e\in End^{C}(W)$, then $e$ is also an idempotent endomorphism of $W\oplus X'$.
       \end{enumerate}
       Therefore, $W\oplus X'\in \xi_{f}$. By the maximality of $(W,e)$, then $X'=0$ (since $W\subseteq W\oplus X'$.) Thus, when $X'=C^{\ast}\rightharpoonup x= 0$, then $x$ must be zero.
      \end{enumerate}
  \item[(B)] Now we prove that for any $m\in W\oplus X$ and $f(m)\in W$, it implies $m\in W$. \\
  Let $m=w+x\in W\oplus X$ such that $f(m)=f(w)+f(x)$. Since $W$ is $f$-invariant, $f(w)\in W$. Consequently, $f(x)=f(m)-f(w)\in W$. By using (A) we have $x=0$ and $m=w\in W$.
\end{description}
\end{proof}

We modify Theorem 3.7 in \cite{cam06} for the comodule case.
\begin{theorem}\label{37} Let $C$ be a flat $R$-module $C$ and be an $R$-coalgebra satisfying the $\alpha^{\ast}$-condition and $f\in End^{C}(M)$. Let $C$-comodule $M$ be either a semisimple $C$-comodule or a continuous comodule. No nonzero element of $M$ is annihilated by a left ideal of $C^{\ast}$. If $(W,e)\in \xi_{f}$, where $\xi_{f}$ is as in the Remark \ref{remark1}, then
\begin{description}
  \item[(A)] $(W,e)$ is a maximal element of $\xi_{f}$ if and only if $W=M$.
  \item[(B)] Given any $(W_{0},e_{0})\in \xi_{f}$, there exists a clean decomposition $f=e+u$ where $e$ is an idempotent of $End^{C}(M)$ extensions of
      $e_{0}$, and $u$ is a unit of $End^{C}(M)$. In particular, $M$ is a clean $C$-comodule.
  \item[(C)] Let $W_{1},W_{2}$ be $C$-subcomodules of $M$ with $W_{1}\cap W_{2}=0$ such that $f|_{W_{1}}$ and $1-f|_{W_{2}}$ are both automorphisms
      $C$-comodule. Then there exists a clean decomposition $f=u+e$  where $e$ is an idempotent element of $End^{C}(M)$ and $u$ is a unit of
      $End^{C}(M)$ restricted to zero on $W_{1}$ and to identity on $W_{2}$.
\end{description}
\end{theorem}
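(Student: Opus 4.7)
All three parts rest on the same Zorn-type strategy: produce a maximal element of $\xi_f$ and show that its first coordinate must be all of $M$. The heart of the argument is part (A); once (A) is in hand, (B) and (C) follow by manufacturing a suitable starting element of $\xi_f$ and invoking (A).

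For part (A), the implication ``$W=M$ $\Rightarrow$ $(W,e)$ maximal'' is immediate, since no $f$-invariant $C$-subcomodule properly contains $M$. For the converse I would argue by contradiction, assuming $(W,e)$ is maximal in $\xi_f$ with $W\subsetneq M$ and producing a strictly larger pair in $\xi_f$. Pick $x\in M\setminus W$ and form $X:=C^{\ast}\rightharpoonup x$; by the $\alpha^{\ast}$-condition, $X$ is a $C$-subcomodule of $M$, and the hypothesis that no nonzero element of $M$ is annihilated by a left ideal of $C^{\ast}$ forces $X\neq 0$. Using either semisimplicity (which gives an immediate $C$-complement of $W$) or the continuous hypothesis (which supplies a direct summand via $(CM_1)$, the closure construction of Lemma \ref{clossubcom}, and the fact that a continuous comodule is quasi-continuous, so Lemma \ref{idemcom} applies), I would arrange $X\cap W=0$ with $W\oplus X$ being $f$-invariant. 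Lemma \ref{35} is the technical workhorse, since its parts (A) and (B) describe precisely how $f$ behaves on $C$-subcomodules disjoint from a maximal $W$. I would then extend $e$ to an idempotent $e'\in End^{C}(W\oplus X)$ by projecting onto $X$ along $W$, or by invoking Proposition \ref{essenmono} when only an essential extension is available, and verify that $f|_{W\oplus X}-e'$ is an automorphism via a case split on whether $f(x)\in W$, using Lemma \ref{35}(A) to kill the kernel. The main obstacle of the entire theorem sits here: piecing together the right complement, the right idempotent extension, and the verification that $f-e'$ is a unit, uniformly across the semisimple and continuous cases.

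Part (B) is then essentially immediate. Given any $(W_0,e_0)\in\xi_f$, Remark \ref{remark1} together with Zorn's Lemma produces a maximal $(W,e)\in\xi_f$ above $(W_0,e_0)$. By (A), $W=M$, so $u:=f-e\in End^{C}(M)$ is a unit and $f=e+u$ is the desired clean decomposition with $e$ extending $e_0$. Since $(0,0)\in\xi_f$ for every $f\in End^{C}(M)$, specializing to $(W_0,e_0)=(0,0)$ shows that every such $f$ is clean, so $M$ is a clean $C$-comodule.

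For part (C), the plan is to cook up an initial pair $(W_1\oplus W_2,e_0)\in\xi_f$ that already encodes the prescribed behaviour on $W_1$ and $W_2$, then apply (B). Since $W_1,W_2$ are $C$-subcomodules with $W_1\cap W_2=0$, the sum $W_1\oplus W_2$ is a $C$-subcomodule of $M$, and the projection $e_0\colon W_1\oplus W_2\to W_1\oplus W_2$, $w_1+w_2\mapsto w_2$, is a $C$-colinear idempotent by the same argument used in Proposition \ref{NSCCleanCom2}. The hypothesis that $f|_{W_1}$ is an automorphism gives $f(W_1)=W_1$, while $(1-f)|_{W_2}$ being an automorphism gives $f(W_2)\subseteq W_2$; hence $W_1\oplus W_2$ is $f$-invariant. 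Moreover $f|_{W_1\oplus W_2}-e_0$ restricts to the automorphism $f|_{W_1}$ on $W_1$ and to $-(1-f|_{W_2})$ on $W_2$, so it is a unit of $End^{C}(W_1\oplus W_2)$. Thus $(W_1\oplus W_2,e_0)\in\xi_f$, and applying (B) extends $e_0$ to an idempotent $e\in End^{C}(M)$ with $e|_{W_1}=0$, $e|_{W_2}=1_{W_2}$, and $u:=f-e$ a unit in $End^{C}(M)$, yielding the desired clean decomposition $f=u+e$.
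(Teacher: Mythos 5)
Your reductions of (B) and (C) to (A) are correct and coincide with the paper's: (B) is Zorn's Lemma via Remark \ref{remark1} plus (A), and (C) seeds $\xi_f$ with $(W_1\oplus W_2,e_0)$, $e_0$ the projection onto $W_2$ along $W_1$, exactly as the paper does. The trivial direction of (A) is also fine.

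The gap is in the ``only if'' direction of (A), and it is the central difficulty of the whole theorem. Your plan is to pick $x\in M\setminus W$, adjoin the single cyclic subcomodule $X=C^{\ast}\rightharpoonup x$, and verify that $f|_{W\oplus X}-e'$ is a unit ``via a case split on whether $f(x)\in W$.'' But Lemma \ref{35}(A) already tells you that if $f(x)\in W$ and $x$ lies in a complement of $W$, then $x=0$; so for any genuine enlargement the only surviving case is $f(x)\notin W$, and in that case $W\oplus X$ need not be $f$-invariant at all --- $f(x)$ can land outside $W\oplus X$ entirely, so there is no endomorphism $f|_{W\oplus X}-e'$ to analyze. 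A one-step extension cannot close this case. The paper's proof instead proceeds in three stages: first it shows $W$ is a direct summand $M=W\oplus X$ (this is where $(CM_1)$, the left ideal $I=\{\alpha:\alpha\rightharpoonup y\in W\}$, the no-annihilation hypothesis, and Proposition \ref{essenmono} are consumed); then it splits on whether $M=W\oplus f(X)$. If equality holds, Lemma \ref{NSCCleanCom1} upgrades $(W,e)$ to $(M,e')$. If not, one finds $v\neq 0$ with $C^{\ast}\rightharpoonup v\cap(W\oplus f(X))=0$, proves the sum $W+\sum_{i}C^{\ast}\rightharpoonup f^{i}(v)$ is direct (using Lemma \ref{35}), observes that $f$ acts on $L=\bigoplus_{i\geq 0}C^{\ast}\rightharpoonup f^{i}(v)$ as the forward shift operator, and invokes the shift-operator cleanness lemma (Lemma 3.1) to produce $(W\oplus L,e\oplus e')\in\xi_f$, contradicting maximality. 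The infinite tower generated by iterating $f$ is unavoidable precisely because $f$ keeps pushing elements out of any finite enlargement; your proposal never confronts this, so the proof of (A) --- and hence of the theorem --- is incomplete as sketched.
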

\begin{proof}
\begin{description}
  \item[(C)] Let $W_{1},W_{2}$ be $C$-subcomodules of $M$ with $W_{1}\cap W_{2}=0$ such that $f|_{W_{1}}$ and $1-f|_{W_{2}}$ are both automorphism
      $C$-comodules. Take $W_{0}=W_{1}\oplus W_{2}$ and $e_{0}$ as the projection of $W_{0}$ onto $W_{2}$ with kernel $W_{1}$, i.e.,
      \begin{equation*}
      e_{0}:W_{0}\rightarrow W_{2},w_{1}+w_{2}\mapsto w_{2}.
      \end{equation*}
      We will prove this point by following (B), Lemma \ref{NSCCleanCom1} and Proposition \ref{NSCCleanCom2}. By using definition of $e_{0}$, $W_{0}\in \xi_{f}$ since $f(W_{0})\subseteq W_{0}$ and $f|_{W_{0}}$ is clean. Thus, from (B) there is $f=e+u$ in $End^{C}(M)$ where $u$ is a unit and $e$ is an idempotent such that $e|_{W}=e_{0}$. For this point, we only need to check that $e_{0}|_{W_{1}}$ is a zero map and $u|_{W_{2}}$ is the identity $I_{W_{2}}$.
      \begin{enumerate}
      \item For any $w_{0}=w_{1}+w_{2}\in W_{0}$ with $w_{1}\in W_{1}$ and $w_{2}\in W_{2}, e_{0}\circ e_{0}(w_{0})=e_{0}(w_{0})=w_{2}$. Therefore, $e_{0}$ is an idempotent in $End^{C}(W_{0})$. For any $w_{1}\in W_{1}, w_{1}=w_{1}+0\in W_{0}$ and $e_{0}(w_{1}+0)=0$. Thus  $e_{0}|_{W_{1}}$ is a zero map.
      \item By \cite{wis91}, $W_{0}=W_{1}\oplus W_{2}=Ker(e_{0})\oplus Im(e_{0})$, such that $f(W_{1})\subseteq Ker(e_{0})=W_{1}$ and $(1-f)(W_{2})\subseteq Im(e_{0})=W_{2}$. From Proposition \ref{NSCCleanCom2} we have that $u|_{W_{0}}=f|_{W_{0}}-e_{0}\in End^{C}(W_{0})$ is a unit.
      \item For any $w_{2}=0+w_{2}\in W_{0}$, \\
      \begin{align*}
      u|_{W_{0}}(w_{2})&= f|_{W_{0}}-e_{0}(0+w_{2})\\
      &=f|_{W_{0}}(w_{2})+w_{2}\\
      &=f|_{W_{0}}(w_{2})-I_{W_{2}})(w_{2})\\
      &=-(1-f)_{W_{0}}(w_{2})\\
      &\subseteq Im(e_{0})\\
      &=W_{2}\\
      &=I_{W_{2}}(w_{2})
      \end{align*}
      It means, a unit $u$ in $End_{R}(M)$ restricted to identity in $W_{2}$.
      \end{enumerate}
\item[(B)] The proof is following from (A). From Remark \ref{remark1}, any $(W_{0},e_{0})\in \xi_{f}$ are bounded by a maximal element $(W,e)$ of $\xi_{f}$. From (A) if $(W,e)$ is the maximal element, then $W=M$. That is, there exists an idempotent $e\in End^{C}(M),e|_{W_{0}}=e_{0}$  which is $f-e$ is clean in $End^{C}(M)$.
\item[(A)] It is trivial that if $W=M$, then $(W,e)$ is a maximal element of $\xi_{f}$. Thus, we need only prove for the non-trivial "only if" part in (A), which will be presented in three steps below.

    Let $(W,e)$ be maximal.  We want to prove that $W=M$ if $M$ is either a semisimple $C$-comodule or $M$ is a continuous comodule and that no nonzero element of $M$ is annihilated by an essential left ideal of $C^{\ast}$.
\begin{enumerate}
  \item \textbf{Step 1.} We shall first prove that $W$ is a summand of $M$.
  \begin{enumerate}
  \item If $M$ is a semisimple $C$-comodule, there is nothing to prove since every subcomodule of $M$ is a direct summand \cite{wisbauer}.
  \item Let us assume that $M$ is a continuous comodule and that no nonzero element of $M$ is annihilated by an essential left ideal
  of $C^{\ast}$. Since $M$ is a continuous $C$-comodule, $M$ is $CS$ (satisfying $(M_{1})$), i.e., for every $C$-subcomodule $W$ of $M$ there exists a $C$-subcomodule $E\subseteq M$ where $E$ is a summand $M$ such that $W\subseteq^{e}E$. We will prove $W$ is a direct summand of $M$ by proving $W$ is (essentially) closed in $M$ (has no proper essential extensions subcomodule in $M$ such that $W$ is summand of $M$), i.e., $W=E$.
  \begin{enumerate}
  \item Let $E$ be a maximal essential extension $C$-subcomodule of $W$ in $M$ and $M=E\oplus X$ for some $C$-subcomodule $X$.
  \item For $y\in E$, let
      \begin{center}
        $I:=\{\alpha\in C^{\ast}| \alpha \rightharpoonup y\in W \}\subseteq C^{\ast}$.
      \end{center}
       For any $\alpha_{1},\alpha_{2}\in I$,
       \begin{enumerate}
       \item $(\alpha_{1}-\alpha_{2})\rightharpoonup y=(\alpha_{1}\rightharpoonup y)-(\alpha_{2}\rightharpoonup y)\in W$ (by scalar multiplication $"\rightharpoonup "$), then $\alpha_{1}-\alpha_{2}\in I$;
       \item and $(\alpha_{1}\ast\alpha_{2})\rightharpoonup y=\alpha_{1}\rightharpoonup(\alpha_{2}\rightharpoonup y)\in W$ (by $\ast$ as a scalar multiplication of $C^{\ast}$-module $M$, then $(\alpha_{1}\ast\alpha_{2})\in I$.
       \item Since $W$ is a $C^{\ast}$-module and $\alpha\rightharpoonup y\in W$, then for any $\beta\in C^{\ast},\beta\rightharpoonup(\alpha_{1}\rightharpoonup y)\in W$. Hence, $C^{\ast}I\subseteq I$. That is, $I$ is a left ideal of $C^{\ast}$.
       \end{enumerate}
  \item In this point we want to show $f(E)\subseteq E$. Since $M= E\oplus X$, for $y\in E, f(y)=z+x\in M$ for some $z\in E$ and $x\in X$. For any $\alpha_{1} \in I$,
      \begin{align*}
      \alpha_{1}\rightharpoonup f(y)&=\alpha_{1} \rightharpoonup (z+x)\\
      \Leftrightarrow \alpha_{1}\rightharpoonup f(y)&=\alpha_{1} \rightharpoonup z+\alpha_{1} \rightharpoonup x\\
      \Leftrightarrow f(\alpha_{1} \rightharpoonup y)&=\alpha_{1} \rightharpoonup z+\alpha_{1} \rightharpoonup x\\
      \Leftrightarrow -(\alpha_{1} \rightharpoonup x)&= \alpha_{1} \rightharpoonup z -(f(\alpha_{1} \rightharpoonup y))\\
      &\in E+W, \text{ (since $W$ is $f$-invariant and $\alpha_{1}\rightharpoonup y\in W$)}\\
      &\subseteq E, \text{ (since $W\subseteq E$)}
      \end{align*}
       This implies that $\alpha_{1}\rightharpoonup x\in E\cap X=\{0\}$ or $\alpha_{1}\rightharpoonup x=0$ for any $\alpha_{1}\in I$ and $I\subseteq Ann_{C^{\ast}}(x)\subseteq C^{\ast}$. 
        On the other hand  $C$-comodule $M$ has no nonzero element of $M$ that is annihilated by a left ideal of $C^{\ast}$ implies that if  $\alpha_{1}\rightharpoonup x =0$, then $x=0$. Thus, $f(y)=z+x=z$ for some $z\in E$ or $f(E)\subseteq E$.
  \item If the $C$-comodule $M$ is continuous and $E$ is a summand of $M$, then $E$ is also continuous. Here, let us collect our results, i.e., $W\subseteq^{e} E, E$ is continuous and $W$ is $f$-invariant, $(W,e)\in \xi_{f}$ such that $f|_{W}-e$ is a unit in $End^{C}(W)$ (essentially co-Hopfian). By Remark \ref{remark1}, there exists an idempotent $e'\in End^{C}(E)$ such that $e'|_{W}=e$ and $f|_{E}-e'$ is a unit in $End^{C}(E)$. That is, $(E,e')\in \xi_{f}$. By maximality of $(W,e)\in \xi_{f}$ implies that $W=E$ or $M=W\oplus X$.
  \end{enumerate}
  From here on, we shall assume that $M$ is a continuous comodule. Of course, semisimplicity will suffice.
  \end{enumerate}
  \item By Step 1, $M=W\oplus X$ and $W$ is $f$-invariant such that $f(M)=f(W)+f(X)\subseteq W+f(X)\subseteq M$. For any $x\in X, f(x+0)=f(x)$. Thus, $f:X\rightarrow f(X)$ is an isomorphism and $W\cap f(X)=0$. We are going to continue our work with two more steps, i.e., by assuming $M=W\oplus f(X)$ and $M\neq W\oplus f(X)$.\\

      \textbf{Step 2:} Let us assume that $M = W\oplus f(X)$. For the idempotent endomorphism $e\in End^{C}(W)$, take $A=Ker(e)$ and $B=Im(e)$. We already know that $f|_{W}-e$ is a unit, since $(W,e)\in \xi_{f}$. By using Lemma \ref{NSCCleanCom1}, $W=A\oplus B=C\oplus D$ where $f:A\rightarrow C$ and $1-f:B\rightarrow D$ are isomorphisms. That is,
      \begin{center}
        $M= (A\oplus B)\oplus X\simeq (C\oplus D)\oplus f(X)$.
      \end{center}
      Because of the isomorphic property of $f:A\rightarrow C$ and $f:X\rightarrow f(X)$, we obtain $f:A\oplus X\rightarrow C\oplus f(X)$ is also an isomorphism. By Lemma \ref{NSCCleanCom1}, we get $f-e$ is a unit in $End^{C}(M)$ or $(M,e')\in \xi_{f}$ with $e':M \rightarrow B$ is a projection with $Ker(e')=A\oplus X$. On the other hand, $(W,e)\leq (M,e')$. By using maximality of $(W,e)$, we have $W=M$.

   \textbf{Step 3:} Now we assume that $W\oplus f(X)\neq M$. Since $M$ is continuous and $X\simeq f(X)$, from $(CM_{1})$ and $(CM_{2})$ we have $W\oplus f(X)$ are summands of $M$. By using $(CM_{1})$ for every $0\neq v\in M$, $C$-subcomodule $C^{\ast}\rightharpoonup v$ is essential inside a (direct) summand of $M$. Since $W\oplus f(X)\neq M$, there exists a $0\neq v\in M$ such that
     \begin{equation}\label{cv}
      C^{\ast}\rightharpoonup v\cap (W\oplus f(X))=0.
      \end{equation}
  On the other hand, $M=W\oplus X$,
      \begin{equation}\label{1}
      f(M)=f(W)+f(X)\subseteq W\oplus f(X), \text{ (since $W$ is $f$-invariant)}
      \end{equation}
      \begin{enumerate}
      \item \textbf{Claim 1}. The sum $W+\sum_{i=0}^{n\in \mathbb{N}}C^{\ast}\rightharpoonup f^{i}(v)$ where $v\in V$ is direct.
  \begin{enumerate}
  \item Suppose that
       \begin{equation}\label{fw}
       0=w+\sum_{i=0}^{n\in \mathbb{N}} \alpha_{i}\rightharpoonup f^{i}(v)\in W
    \end{equation}
    where $w\in W$ and $\alpha_{i}\in C^{\ast}$ for any $i\in \mathbb{N}$. From Equation \ref{1} and Equation \ref{fw} we have:
 \begin{align*}
 \alpha_{0}\rightharpoonup f^{0}(v)=\alpha_{0}\rightharpoonup v\\
 &=-(w+\sum_{i=1}^{n\in \mathbb{N}} \alpha_{i}\rightharpoonup f^{i}(v))\\
 &\in W+f(M)\\
 \subseteq W\oplus f(X)
 \end{align*}
 Consequently, $\alpha_{0}\rightharpoonup v\in (C^{\ast}\rightharpoonup v)\cap (W\oplus f(X))$ and implies $\alpha_{0}\rightharpoonup v=0$ (see Equation \ref{cv}).
 This gives,
 \begin{align*}
& w+\sum_{i=1}^{n\in \mathbb{N}} f^{i}(\alpha_{i}\rightharpoonup v)=0\\
& \Leftrightarrow \sum_{i=1}^{n\in \mathbb{N}} f^{i}(\alpha_{i}\rightharpoonup  v)=-w, \text{ (for some $w\in W$)}\\
& \Leftrightarrow f(\alpha_{1}\rightharpoonup v+....f^{n-1}(\alpha_{n}\rightharpoonup v))\in W.
 \end{align*}
 Take
 \begin{align*}
  m&=\alpha_{1}\rightharpoonup v+....f^{n-1}(\alpha_{n}\rightharpoonup v)\\
  &\in C^{\ast}\rightharpoonup v+f(M)\\
  &\subseteq C^{\ast}\rightharpoonup v\oplus W\oplus f(X) \text{ (since $C^{\ast}\rightharpoonup v\cap (W\oplus f(X))={0}$)} \\
  &=(C^{\ast}\rightharpoonup v\oplus f(X))\oplus W.
  \end{align*}
   Consider $C^{\ast}\rightharpoonup v\oplus f(X)$ as a $C$-subcomodule of $M$ with
    \begin{center}
    $f(\alpha_{1}\rightharpoonup v+....f^{n-1}(\alpha_{n}\rightharpoonup v))\in W$.
\end{center}
Based on Lemma \ref{35} we have that $\alpha_{1}\rightharpoonup v+....f^{n-1}(\alpha_{n}\rightharpoonup v)\in W$. Consequently,
  \begin{align*}
  \alpha_{1}\rightharpoonup v+....f^{n-1}(\alpha_{n}\rightharpoonup v)&=w_{1} \text{ (for some $w_{1}\in W$)}\\
  \alpha_{1}\rightharpoonup v&=w_{1}-(f(\alpha_{2}\rightharpoonup v)+....f^{n-1}(\alpha_{n}\rightharpoonup v))\\
  &\subseteq W+f(M).
  \end{align*}
   Therefore, $\alpha_{1}\rightharpoonup v\in C^{\ast}\rightharpoonup v \cap W\oplus f(X)$ and implies $\alpha_{1}\rightharpoonup v=0$. Further repetition of this argument shows that $\alpha_{i}\rightharpoonup v=0$ for all $i$. Consequently, we will have the result that $w=0$ as well. On the other hand, we also find that
   \end{enumerate}
   \begin{center}
   $L:=\oplus_{i=0}^{n\in \mathbb{N}}C^{\ast}\rightharpoonup f^{i}(v)$
 \end{center}
 which is $L$ is nonzero since $v\neq 0$. Thus, $W\cap L =\{0\}$,
   \item \textbf{Claim 2} $f$ maps $f^{i}(C^{\ast}\rightharpoonup v)$ isomorphically onto $f^{i+1}(C^{\ast}\rightharpoonup v)$ for all $i\geq 0$.
   \begin{enumerate}
   \item Let $f(f^{i}(g\rightharpoonup v))=0\in W$ where $g\in C^{\ast}$. We consider $f^{i}(g\rightharpoonup v)\in f(M)\subseteq W\oplus f(X)$. Based on Lemma \ref{35}(B) and Claim 1, we have $f^{i}(g\rightharpoonup v)\in W\cap f^{i}(C^{\ast}\rightharpoonup v)=0$.
   \item Hence, $f|_{L}$ is the (forward) shift operator on $L$. By Lemma 3.1, we can find an idempotent $e'\in End_{R}(L)$ such that $f|_{L}-e'$ is a unit, which implies that $e\oplus e'$ is an idempotent endomorphism in $End^{C}(W\oplus L)$
   \item The $C$-subcomodule $W\oplus L$ is $f$-invariant, since
       \begin{align*}
       f|_{W\oplus L}&=f(W+L)\\
       &=f(W)+f(L)\\
       &=f(W)+f(\oplus_{i=0}^{n\in \mathbb{N}}C^{\ast}\rightharpoonup f^{i}(v))\\
       &=f(W)+\oplus_{i=0}^{n\in \mathbb{N}}C^{\ast}\rightharpoonup f^{i+1}(v)\\
       &\subseteq W\oplus L \text{ (since W is $f$-invariant)}.
       \end{align*}
   \item Moreover, $f|_{W\oplus L}-(e\oplus e')=(f|_{W}-e)\oplus (f|_{L}e')$ is a unit, since $f|_{W}-e$ and $f|_{L}-e'$ are unit.
   \end{enumerate}
   The explanations above give $(W\oplus L,e\oplus e')\in \xi_{f}$. This contradicts the maximality of $(W,e)$, which means that the case $W\oplus f(X)\neq M$ in Step 3 cannot really arise.
   \end{enumerate}
\end{enumerate}
\end{description}
\end{proof}

Based on Point (A) and (B) in Theorem \ref{37}, we reach an important conclusion as a consequence of the theorem. Since any $(W_{0},e_{0})\in \xi_{f}$ is always bounded by an element maximal in $\xi_{f}$, if $M$ is
\begin{enumerate}
\item a semisimple $C$-comodule or
\item a continuous $C$-comodule with no nonzero element of $M$ that is annihilated by an essential left ideal $C^{\ast}m$
of the dual algebra $C^{\ast}$,
\end{enumerate}
then $M$ is a clean $C$-comodule.

\end{document}